\DeclareMathAlphabet\mathoo{U}{eur}{b}{n}
 \DeclareMathOperator{\Real}{Re}
\newtheorem{theorem}{Theorem}
\newtheorem{proposition}[theorem]{Proposition}
\newtheorem{corollary}[theorem]{Corollary}
\theoremstyle{definition}
\newtheorem{example}{Example}
\theoremstyle{remark}
\newtheorem{remark}{Remark}
\begin{document}
\large

\title[An approximation of matrix exponential]{An approximation of matrix exponential\\
by a truncated Laguerre series}

\author{E.~D. Khoroshikh}
 \address{Department of System Analysis and Control,
Voronezh State University\\ 1, Universi\-tet\-skaya Square, Voronezh 394018, Russia}
\email{\textcolor[rgb]{0.00,0.00,0.84}{xoroshix2002@mail.ru}}
%ORCID: absent

\author{V.~G. Kurbatov$^*$}
 \address{Department of System Analysis and Control,
Voronezh State University\\ 1, Universi\-tet\-skaya Square, Voronezh 394018, Russia}
\email{\textcolor[rgb]{0.00,0.00,0.84}{kv51@inbox.ru}}
\thanks{$^*$ Corresponding author}
%ORCID: 0000-0001-7715-3804

\subjclass{Primary 65F60; Secondary 33C45, 97N50}

\keywords{matrix exponential, impulse response, generalized Laguerre polynomials, Laguerre functions, estimate of accuracy, least square approximation, Wolfram Mathematica}

\date{\today}

\begin{abstract}
The Laguerre functions $l_{n,\tau}^\alpha$, $n=0,1,\dots$, are constructed from generalized Laguerre polynomials. The functions $l_{n,\tau}^\alpha$ depend on two parameters: scale $\tau>0$ and order of generalization $\alpha>-1$, and form an orthogonal basis in $L_2[0,\infty)$. Let the spectrum of a square matrix $A$ lie in the open left half-plane. Then the matrix exponential $H_A(t)=e^{At}$, $t>0$, belongs to $L_2[0,\infty)$. Hence the matrix exponential $H_A$ can be expanded in a series $H_A=\sum_{n=0}^\infty S_{n,\tau,\alpha,A}\,l_{n,\tau}^\alpha$. An estimate of the norm $\Bigl\lVert H_A-\sum_{n=0}^N S_{n,\tau,\alpha,A}\,l_{n,\tau}^\alpha\Bigr\rVert_{L_2[0,\infty)}$ is proposed. Finding the minimum of this estimate over $\tau$ and $\alpha$ is discussed. Numerical examples show that the optimal $\alpha$ is often almost 0, which essentially simplifies the problem.
\end{abstract}
\maketitle

\section*{Introduction}\label{s:intro}
An approximate calculation of the matrix exponential $H_A(t)=e^{At}$, $t>0$,
is of constant importance~\cite{Godunov94:ODE:eng,Golub-Van_Loan13:eng,Higham08,
Horn-Johnson01:eng,Moler-Van_Loan78,Moler-Van_Loan03} at least for solving linear differential equations $\dot x(t)=Ax(t)+f(t)$. In this paper, the problem of approximate representation of $H_A(t)$ in the from of a formula depending on the parameter $t$ is discussed.
The matrix $A$ is assumed to be stable, i.~e. the eigenvalues of $A$ lie in the open left half-plane.

Typically, the complete representation of $H_A(t)=e^{At}$ in the form of a formula is very cumbersome (every element of the matrix function $H_A$ is a linear combination of $M$ functions of the form $t\mapsto t^je^{\lambda_k t}$, where $M\times M$ is the size of the matrix $A$) and therefore inconvenient for large $M$. We discuss the approximate representation of $H_A$ in the form of a truncated Laguerre series; it contains fewer terms and is therefore simpler than the complete $H_A$.

There are a lot of papers devoted to the approximation of impulse responses and matrix exponential by the truncated Laguerre series, see, for example, \cite{Belt-Brinker97,Clowes65,Moore11,Prokhorov-Kulikovskikh15,
Sarakr-Koh02,Sheehan-Saad-Sidje10,Srivastava-Mavromatis-Alassar03,Tuma-Jura19} and references therein. The main difference between these papers and the present one lies in the formulation of the problem.

The \emph{generalized Laguerre polynomials} are the functions
\begin{equation*}
L_n^\alpha(t)=\frac{t^{-\alpha}e^t}{n!}\bigl(t^{n+\alpha}e^{-t}\bigr)^{(n)},\qquad\alpha>-1,\; t\ge0,\;n=0,1,\dots.
\end{equation*}
We call \emph{Laguerre functions} the modified Laguerre polynomials:
\begin{equation*}
l_{n,\tau}^\alpha(t)=\sqrt{\frac{n!}{\Gamma(n+\alpha+1)}}
\tau^{\frac{\alpha+1}{2}}\,t^{\frac{\alpha}{2}}\,e^{-\tau t/2}L_n^\alpha(\tau t),\qquad t\ge0,\;\;n=0,1,\dots,
\end{equation*}
The Laguerre functions depend on two parameters: the order of generalization $\alpha>-1$ and the scale $\tau>0$.
The most important and simple case is when $\alpha=0$.

It is known that $l_{n,\tau}^\alpha$ form an orthonormal basis in $L_2[0,\infty)$. Therefore the matrix exponential (impulse response)
\begin{equation*}
H_A(t)=e^{At},\qquad t>0,
\end{equation*}
can be expanded in the \emph{Laguerre series}
\begin{equation*}
H_{A}=\sum_{n=0}^\infty S_{n,\tau,\alpha,A}\,l_{n,\tau}^\alpha
\end{equation*}
with matrix coefficients $S_{n,\tau,\alpha,A}$. The coefficient $S_{n,\tau,\alpha,A}$ can be interpreted (Proposition~\ref{p:func cal}) as the result of the substitution of $A$ in the function $\lambda\mapsto s_{n,\tau,\alpha,\lambda}$, where
\begin{equation*}
s_{n,\tau,\alpha,\lambda}=\int_0^\infty e^{\lambda t}\,l_{n,\tau}^\alpha(t)\,dt.
\end{equation*}

Since the matrix exponential $H_{A}$ is a linear combination of functions of the form $t\mapsto t^je^{\lambda_k t}$ (where $\lambda_k$ are eigenvalues of $A$), which resemble $l_n$, it is natural to expect that the Laguerre series converges rather fast and hence its truncation or partial sum \begin{equation*}
H_{N,\tau,\alpha,A}=\sum_{n=0}^N S_{n,\tau,\alpha,A}\,l_{n,\tau}^\alpha
\end{equation*}
approximates $H_{A}$ quite well.

The aim of this paper is estimating the accuracy $\lVert H_{A}-H_{N,\tau,\alpha,A}\rVert_{L_2}$ and choosing $\tau$ and $\alpha$ that provide the best estimate. The idea of the paper is as follows.
We describe (Theorem~\ref{t:1}) the estimate of $\lVert H_{A}-H_{N,\tau,\alpha,A}\rVert_{L_2}$ in terms of the quantities
\begin{align*}
\varphi(N,\tau,\alpha)=&\sum_{k=0}^{M}\zeta(N,\tau,\alpha,\lambda_k),\\
\psi(N,\tau,\alpha)=&\max_{k}\zeta(N,\tau,\alpha,\lambda_k),
\end{align*}
where $\lambda_k$, $k=1,2\dots,M$, are eigenvalues of $A$, and
\begin{equation*}
\zeta(N,\tau,\alpha,\lambda)
=\int_0^\infty \Bigl\lvert e^{\lambda t}-\sum_{n=0}^{N}\,s_{n,\tau,\alpha,\lambda}\,l_{n,\tau}^\alpha(t)\Bigr\rvert^2\,dt.
\end{equation*}
We recommend to choose $\tau$ and $\alpha$ so that $\varphi(N,\tau,\alpha)$ be minimal (alternatively, $\psi(N,\tau,\alpha)$ could be minimised). Numerical experiments show that the optimal $\alpha$ is often close to 0. Optimization with respect to $\tau$ only significantly simplifies and speeds up calculations.

The paper is organized as follows.
Sections~\ref{s:LF} and~\ref{s:DS and IR} are devoted to definitions and notation. In Section~\ref{s:coeff}, we derive some formulas for Laguerre coefficients. In Section~\ref{s:estimate}, we describe the main estimate. In Section~\ref{s:partial tau}, we recall~\cite{Belt-Brinker97,Brinker-Belt95,Clowes65} formulas for calculating the derivative of $\zeta$ with respect to $\tau$. They simplifies the search for the minimum with respect to $\tau$ only. The problem of finding minimum over $\tau$ only arises, for example, when we restrict ourselves to the case $\alpha=0$; some simplified formulas for the case $\alpha=0$ are collected in Section~\ref{s:alpha=0}.
In Section~\ref{s:opt choice}, we present the recommending algorithm of finding $\tau$ and $\alpha$. In Section~\ref{s:num ex}, we describe the results of two numerical experiments.

We use `Wolfram Mathematica'~\cite{Wolfram} for our computer calculations.

\section{The definition of Laguerre functions}\label{s:LF}
In this section, we recall some definitions.

The functions
\begin{align*}
L_n^\alpha(t)&=\frac{t^{-\alpha}e^t}{n!}\bigl(t^{n+\alpha}e^{-t}\bigr)^{(n)}\\
&=\sum_{k=0}^n(-1)^k\binom{n+\alpha}{n-k}\frac{\lambda^k}{k!},
\qquad t\ge0,\;\;\alpha>-1,\,n=0,1,\dots.
\end{align*}
are called~\cite[p. 775]{Abramowitz-Stegun92:eng},~\cite[p. 71]{FMF63:eng},~\cite[p. 31]{Gautschi04} \emph{generalized Laguerre polynomials}.
The special cases
\begin{equation*}
L_n(t)=\frac{e^t}{n!}\bigl(t^{n}e^{-t}\bigr)^{(n)},\qquad t\ge0,\;n=0,1,\dots.
\end{equation*}
of these functions are called (\emph{ordinary}) \emph{Laguerre polynomials}.
Actually, $L_n^\alpha$ is a polynomial of degree $n$.
It is well-known~\cite[Theorem 5.7.1]{Szego75:eng},~\cite[p.~88]{Lebedev72:eng},~\cite[\S~8]{Nikiforov-Uvarov85:eng} that the functions $L_n^\alpha$ form an orthogonal basis in $L_2[0,\infty)$ with the weight function $t\mapsto t^{\alpha}e^{-t}$:
\begin{equation*}
\int_0^\infty t^\alpha e^{-t}L_n^\alpha(t)\,L_m^\alpha(t)\,dt=\frac{\Gamma(n+\alpha+1)}{n!}\,\delta_{nm},\qquad n,m=0,1,\dots,
\end{equation*}
where $\delta_{nm}$ is the Kronecker symbol.
We note that the ordinary polynomials $L_n$ are normalized, but the generalized ones $L_n^\alpha$, $\alpha\neq0$, are not.

Let $\tau>0$ be a given number. It plays a role of a time scale.
We call the family of functions

\begin{equation}\label{e:Laguerre functions:alpha}
\begin{split}
l_{n,\tau}^\alpha(t)
&=\sqrt{\frac{n!}{\Gamma(n+\alpha+1)}}
\tau^{\frac{\alpha+1}{2}}\,t^{\frac{\alpha}{2}}\,e^{-\tau t/2}L_n^\alpha(\tau t)\\
&=\sqrt{\frac{\tau n!}{\Gamma(n+\alpha+1)}}e^{-\tau t/2}\sum_{k=0}^n(-1)^k\binom{n+\alpha}{n-k}
\frac{(\tau\lambda)^{k+\alpha/2}}{k!},
\quad n=0,1,\dots.
\end{split}
\end{equation}
the (\emph{generalized}) \emph{Laguerre functions}. In particular,
\begin{equation*}%
l_{n,\tau}(t)=l_{n,\tau}^0(t)=\sqrt{\tau}\,e^{-\tau t/2}L_n(\tau t),\qquad t\ge0,\;\;n=0,1,\dots.
\end{equation*}
Evidently, the Laguerre functions $l_{n,\tau}^\alpha$ form an orthonormal basis in $L_2[0,\infty)$ (without weight):
\begin{equation*}
\int_0^\infty l_{n,\tau}^\alpha(t)\,l_{m,\tau}^\alpha(t)\,dt=\delta_{nm},\qquad t\ge0,\;\;n,m=0,1,\dots.
\end{equation*}

\section{Laguerre series for $H_A$}\label{s:DS and IR}
In this section, we introduce some notation.

Let $M$ be a positive integer.
We denote by $\mathbb C^{M\times M}$ the linear space of all matrices of the size $M\times M$; the symbol $\mathbf1\in\mathbb C^{M\times M}$ denotes the identical matrix.

Let $A\in\mathbb C^{M\times M}$ be a given matrix. We assume that $A$ is \emph{stable}, i.~e. the eigenvalues of $A$ lie in the open left half-plane. We discuss the expansion of the function
\begin{equation*}
H_A(t)=e^{At},\qquad t>0,
\end{equation*}
in the series of Laguerre functions. We call $H_A$ the \emph{matrix exponential} or the \emph{impulse response} of the differential equation
\begin{equation*}
\dot x(t)=Ax(t)+f(t).
\end{equation*}

We recall that the generalized Laguerre functions~\eqref{e:Laguerre functions:alpha}
form an orthonormal basis in $L_2[0,\infty)$; here the scale parameter $\tau>0$ and the order of generalization $\alpha>-1$ can be chosen arbitrarily. Therefore the matrix exponential $H_{A}$ can be represented in the form of the \emph{Laguerre series}
\begin{equation*}%\label{e:L-series:H}
H_{A}=\sum_{n=0}^\infty S_{n,\tau,\alpha,A}\,l_{n,\tau}^\alpha,
\end{equation*}
where the Laguerre coefficients
\begin{equation}\label{e:def of S}
S_{n,\tau,\alpha,A}=\int_0^\infty H_{A}(t)\,l_{n,\tau}^\alpha(t)\,dt
\end{equation}
are matrices.
Since the matrix exponential $H_{A}$ is a linear combination of functions of the form $t\mapsto t^je^{\lambda_k t}$, it is natural to expect that the series converges rather fast and hence its $N$-truncation
\begin{equation}\label{e:H_{N,tau,alpha,A}}
H_{N,\tau,\alpha,A}(t)=\sum_{n=0}^N S_{n,\tau,\alpha,A}\,l_{n,\tau}^\alpha(t)
\end{equation}
with relatively small $N$ approximates $H_{A}$ well enough.
Nevertheless, note that the number $N$ can not be taken very large, because of rounding errors.

The aim of this paper is to estimate the quantity
\begin{equation*}
\lVert H_{A}-H_{N,\tau,\alpha,A}\rVert_{L_2[0,\infty)}
=\sqrt{\int_{0}^{\infty}\bigl\lVert H_{A}(t)-H_{N,\tau,\alpha,A}(t)\bigr\rVert_F^2\,dt},
\end{equation*}
where $\lVert\cdot\rVert_F$ is the Frobenius norm,
and to give recommendations on the optimal choice of $\tau$ and $\alpha$ based on it.

\section{The Laguerre coefficients of $h_\lambda$}\label{s:coeff}
In the simplest case, when the matrix $A$ has the size $1\times1$, the problem of construction of approximation~\eqref{e:H_{N,tau,alpha,A}} is reduced to the calculation of Laguerre coefficients $s_{n,\tau,\alpha,\lambda}$ of the function $t\mapsto e^{\lambda t}$; we do it in Proposition~\ref{p:s_n}. Then we describe the expression of $S_{n,\tau,\alpha,\lambda}$ in terms of $s_{n,\tau,\alpha,\lambda}$ (Proposition~\ref{p:func cal}).

For $\Real\lambda<0$ (here and below $\Real$ means the real part of a complex number), we consider the auxiliary function
\begin{equation*}
h_\lambda(t)=e^{\lambda t},\qquad t>0.
\end{equation*}
It is straightforward to verify that
\begin{equation*}
\lVert h_\lambda\rVert_{L_2[0,\infty)}=\frac1{\sqrt{-2\Real\lambda}}.
\end{equation*}

Our interest in the function $h_\lambda$ is explained by the following.
If $\lambda$ is an eigenvalue of $A$ (recall that $\Real\lambda<0$) and $v$ is the corresponding normalized eigenvector, then the function
\begin{equation*}
x_\lambda(t)=H_{A}(t)v
\end{equation*}
can be represented as
\begin{equation*}
x_\lambda(t)=h_{\lambda}(t)v.
\end{equation*}

Let us first perform some calculations with the functions $h_{\lambda}$. They can be interpreted as the approximation of the matrix exponential $H_{A}$ by the truncated Laguerre series~\eqref{e:H_{N,tau,alpha,A}} when $A$ is a matrix of the size $1\times1$ whose only element equals $\lambda$.

We denote by $s_{n,\tau,\alpha,\lambda}$ the Laguerre coefficients of the function $h_{\lambda}$ in the orthonormal basis $l_{n,\tau}^\alpha$:
\begin{equation}\label{e:s via int}
s_{n,\tau,\alpha,\lambda}=\int_0^\infty h_{\alpha,\lambda}(t)\,l_{n,\tau}^\alpha(t)\,dt,\qquad \Real\lambda<0.
\end{equation}
Clearly, $s_{n,\tau,\alpha,\lambda}$ are real for real $\lambda$. Therefore, from the Schwarz reflection principle for holomorphic functions~\cite[theorem 7.5.2]{Greene-Krantz06}, it follows that
\begin{equation}\label{e:bar s}
\overline{s_{n,\tau,\alpha,\lambda}}=s_{n,\tau,\alpha,\bar\lambda},
\end{equation}
where the bar means the complex conjugate. Representation~\eqref{e:bar s} is useful for symbolic calculation of derivatives.

\begin{proposition}\label{p:s_n}
Let $\Real\lambda<0$. Then
\begin{equation}\label{e:s_{n,tau,alpha,lambda}}
\begin{split}
s_{n,\tau,\alpha,\lambda}
&=\frac{\Gamma(\alpha/2+1)}{(\tau/2-\lambda)^{\alpha/2+1}}\,
\tau^{\frac{\alpha+1}{2}}\,
\binom{n+\alpha}{n}\,\sqrt{\frac{n!}{\Gamma(n+\alpha+1)}}\,\\
&\times{}_2F_1\bigl(-n,\alpha/2+1,\alpha+1,\tau/(\tau/2-\lambda)\bigr),
\end{split}
\end{equation}
where ${}_2F_1$ is the hypergeometric function.
In particular,
\begin{equation*}%\label{e:func s}
s_{n,\tau,0,\lambda}=-\frac{2\sqrt{\tau} (2\lambda+\tau)^n}{(2\lambda-\tau)^{n+1}},\qquad n=0,1,\dots.
\end{equation*}
\end{proposition}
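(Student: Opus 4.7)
The plan is to compute the defining integral directly using the explicit formula for $l_{n,\tau}^\alpha$ and the finite series expansion of $L_n^\alpha$, and then to recognize the resulting finite sum as a Gauss hypergeometric ${}_2F_1$.

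First, I would substitute the first line of~\eqref{e:Laguerre functions:alpha} into~\eqref{e:s via int} and change variables $u=\tau t$. This reduces the integral to
\begin{equation*}
s_{n,\tau,\alpha,\lambda}=\sqrt{\frac{n!}{\Gamma(n+\alpha+1)}}\,\tau^{-1/2}\int_0^\infty u^{\alpha/2}\,e^{-\beta u}\,L_n^\alpha(u)\,du,
\end{equation*}
where $\beta=(\tau/2-\lambda)/\tau$. Stability of $A$ (equivalently $\Real\lambda<0$) guarantees $\Real\beta>0$, so the integral converges absolutely and the manipulations below are justified. Then I would insert the explicit polynomial expansion $L_n^\alpha(u)=\sum_{k=0}^n(-1)^k\binom{n+\alpha}{n-k}u^k/k!$ and integrate term by term using $\int_0^\infty u^{k+\alpha/2}e^{-\beta u}\,du=\Gamma(k+\alpha/2+1)/\beta^{k+\alpha/2+1}$.

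The next step is the bookkeeping: factor $\Gamma(\alpha/2+1)/\beta^{\alpha/2+1}$ out of the sum so the remaining $k$-th term involves the Pochhammer symbol $(\alpha/2+1)_k$, and rewrite $\binom{n+\alpha}{n-k}$ using the identity $\binom{n+\alpha}{n-k}=\binom{n+\alpha}{n}\cdot\frac{n!}{(n-k)!\,(\alpha+1)_k}$. Together with $(-n)_k=(-1)^k n!/(n-k)!$, the sum then takes the canonical form $\sum_{k=0}^n\frac{(-n)_k(\alpha/2+1)_k}{(\alpha+1)_k\,k!}(1/\beta)^k={}_2F_1(-n,\alpha/2+1,\alpha+1,1/\beta)$. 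Restoring $1/\beta=\tau/(\tau/2-\lambda)$ and collecting all $\tau$-powers yields~\eqref{e:s_{n,tau,alpha,lambda}}.

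For the special case $\alpha=0$, I would just specialize the general formula: the prefactors collapse ($\Gamma(1)=1$, $\binom{n}{n}=1$, $\sqrt{n!/\Gamma(n+1)}=1$), and the hypergeometric reduces via ${}_2F_1(-n,1,1,z)=\sum_{k=0}^n\binom{n}{k}(-z)^k=(1-z)^n$. Plugging $z=\tau/(\tau/2-\lambda)$, one computes $1-z=(2\lambda+\tau)/(2\lambda-\tau)$, and simplifying the leading factor $\sqrt{\tau}/(\tau/2-\lambda)=2\sqrt{\tau}/(\tau-2\lambda)=-2\sqrt{\tau}/(2\lambda-\tau)$ gives the stated closed form.

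The main obstacle is purely bookkeeping, namely the Pochhammer/binomial rewriting that converts the finite sum into the standard ${}_2F_1$; everything else is a direct calculation. There is no analytic subtlety beyond ensuring $\Real\beta>0$ (which follows from $\Real\lambda<0$ and $\tau>0$) so that the termwise integration and the chosen branches of the $\tau$- and $(\tau/2-\lambda)$-powers are unambiguous.
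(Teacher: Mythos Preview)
Your proposal is correct and complete; the termwise integration, the Pochhammer bookkeeping, and the $\alpha=0$ specialization all check out exactly as you describe.

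The route, however, differs from the paper's. The paper does not expand $L_n^\alpha$ as a finite sum and integrate term by term; instead it quotes a ready-made integral identity from Srivastava--Mavromatis--Alassar,
\[
\int_0^\infty t^\beta e^{-\sigma t} L_n^\alpha(\tau t)\,L_k^\beta(\sigma t)\,dt
=\binom{n+\alpha}{n-k}\binom{k+\beta}{k}\frac{\tau^k\Gamma(\beta+1)}{\sigma^{\beta+k+1}}
{}_2F_1\bigl(-n+k,\beta+k+1,\alpha+k+1,\tau/\sigma\bigr),
\]
and specializes it with $k=0$, $\beta=\alpha/2$, $\sigma=\tau/2-\lambda$ (using $L_0^{\alpha/2}\equiv1$). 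Your argument is more elementary and self-contained: it needs nothing beyond $\int_0^\infty u^{s}e^{-\beta u}\,du=\Gamma(s+1)/\beta^{s+1}$ and the standard Pochhammer/binomial rewrites, so a reader does not have to chase the cited formula. The paper's approach is shorter on the page and, because the quoted identity is stated for arbitrary $k$ and $\beta$, it also immediately yields the generalization in Remark~\ref{r:Mavromatis} for the coefficients of $t^j e^{\lambda t}$ (take $\beta=\alpha/2+j$); with your method one would simply repeat the same termwise computation with the exponent $\alpha/2$ replaced by $\alpha/2+j$.
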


\begin{remark}\label{r:hypergeometric}
We note that the function $z\mapsto{}_2F_1\bigl(-n,\alpha/2+1,\alpha+1,z\bigr)$ is a polynomial of degree $n$, since~\cite[p.~10]{FMF63:eng} its first argument $-n$ is a negative integer. Thus it is calculated quickly and accurately.
\end{remark}

\begin{proof}
We begin with the formula~\cite[formula (16)]{Srivastava-Mavromatis-Alassar03}
% (see also~\cite{Mavromatis90,Mavromatis-Alassar01})
\begin{align*}
\int_0^\infty t^\beta\,e^{-\sigma t}\,L_n^\alpha(\tau t)\,L_k^\beta(\sigma t)\,dt
&=\binom{n+\alpha}{n-k}\binom{k+\beta}{k}\,\frac{\tau^k\,\Gamma(\beta+1)}{\sigma^{\beta+k+1}}\\
&\times{}_2F_1\bigl(-n+k,\beta+k+1,\alpha+k+1,\tau/\sigma\bigr).
\end{align*}
We have (see~\eqref{e:Laguerre functions:alpha} and note that $L_0^\beta(t)=1$ for all $t\ge0$ and $\binom{\alpha/2}{0}=1$)
\begin{align*}
s_{n,\tau,\alpha,\lambda}
&=\int_0^\infty e^{\lambda t}\,\sqrt{\frac{n!}{\Gamma(n+\alpha+1)}}\,
\tau^{\frac{\alpha+1}{2}}\,t^{\frac{\alpha}{2}}\,e^{-\tau t/2}\,L_n^\alpha(\tau t)\,dt\\
&=\sqrt{\frac{n!}{\Gamma(n+\alpha+1)}}\,
\tau^{\frac{\alpha+1}{2}}\,
\int_0^\infty e^{(\lambda-\tau/2)t}\,t^{\frac{\alpha}{2}}\,L_n^\alpha(\tau t)\,dt\\
&=\sqrt{\frac{n!}{\Gamma(n+\alpha+1)}}\,
\tau^{\frac{\alpha+1}{2}}\,
\int_0^\infty t^{\frac{\alpha}{2}}\,e^{(\lambda-\tau/2)t}\,L_n^\alpha(\tau t)\,L_0^{\alpha/2}\bigl((\tau/2-\lambda) t\bigr)\,dt\\
&=\sqrt{\frac{n!}{\Gamma(n+\alpha+1)}}\,
\tau^{\frac{\alpha+1}{2}}\,
\binom{n+\alpha}{n}\,\frac{\Gamma(\alpha/2+1)}{(\tau/2-\lambda)^{\alpha/2+1}}\\
&\times{}_2F_1\bigl(-n,\alpha/2+1,\alpha+1,\tau/(\tau/2-\lambda)\bigr).\qed
\end{align*}
\renewcommand\qed{}
\end{proof}

\begin{remark}\label{r:Mavromatis}

In a similar way one can derive the formula for the Laguerre coefficients of the functions $t\mapsto t^je^{\lambda t}$ which correspond to generalized eigen\-vec\-tors of $A$:
\begin{align*}
q_{n,\tau,\alpha,\lambda}
&=\int_0^\infty t^je^{\lambda t}\,l_{n,\tau}^\alpha(t)\,dt
=\frac{\Gamma(\alpha/2+j+1)}{(\tau/2-\lambda)^{\alpha/2+1}}\,
\tau^{\frac{\alpha+1}{2}}\,
\binom{n+\alpha}{n}\,\\
&\times\sqrt{\frac{n!}{\Gamma(n+\alpha+1)}}\,
{}_2F_1\bigl(-n,\alpha/2+j+1,\alpha+1,\tau/(\tau/2-\lambda)\bigr).
\end{align*}
\end{remark}

\begin{proposition}\label{p:func cal}
Let the spectrum of $A$ lie in the open left half-plane. Then
the coefficient $S_{n,\tau,\alpha,A}$ is the function $\lambda\mapsto s_{n,\tau,\alpha,\lambda}$ of $A$.
\end{proposition}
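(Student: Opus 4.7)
The plan is to verify the identity $S_{n,\tau,\alpha,A}=s_{n,\tau,\alpha,\cdot}(A)$ by passing $e^{At}$ through the holomorphic (Riesz--Dunford) functional calculus and then exchanging the order of integration. First I would observe that, as noted in Remark~\ref{r:hypergeometric}, the scalar function $\lambda\mapsto s_{n,\tau,\alpha,\lambda}$ is holomorphic on the half-plane $\Real\lambda<\tau/2$ (in fact it is essentially a polynomial divided by a power of $\tau/2-\lambda$); since the spectrum of $A$ lies in the open left half-plane, this function is holomorphic on a neighbourhood of $\sigma(A)$, so its value at $A$ in the sense of the functional calculus is well defined.

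Next, I would choose a positively oriented rectifiable contour $\Gamma$ that encircles $\sigma(A)$ and stays strictly inside the open left half-plane, so that $\Real\lambda\le -c<0$ for some $c>0$ on $\Gamma$. Using the Riesz--Dunford representation
\begin{equation*}
e^{At}=\frac1{2\pi i}\oint_\Gamma e^{\lambda t}(\lambda\mathbf1-A)^{-1}\,d\lambda,\qquad t\ge0,
\end{equation*}
I would insert this into the definition~\eqref{e:def of S}:
\begin{equation*}
S_{n,\tau,\alpha,A}=\int_0^\infty\!\!\left(\frac1{2\pi i}\oint_\Gamma e^{\lambda t}(\lambda\mathbf1-A)^{-1}\,d\lambda\right)l_{n,\tau}^\alpha(t)\,dt.
\end{equation*}
I would then swap the two integrations by Fubini, which gives
\begin{equation*}
S_{n,\tau,\alpha,A}=\frac1{2\pi i}\oint_\Gamma(\lambda\mathbf1-A)^{-1}\left(\int_0^\infty e^{\lambda t}\,l_{n,\tau}^\alpha(t)\,dt\right)d\lambda=\frac1{2\pi i}\oint_\Gamma s_{n,\tau,\alpha,\lambda}\,(\lambda\mathbf1-A)^{-1}\,d\lambda,
\end{equation*}
which is precisely the functional calculus value of $\lambda\mapsto s_{n,\tau,\alpha,\lambda}$ at $A$.

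The only step that requires care is the Fubini exchange. Since $\Gamma$ is compact, the resolvent $(\lambda\mathbf1-A)^{-1}$ is bounded on $\Gamma$ by some constant $K$; on the other hand $|e^{\lambda t}|\le e^{-ct}$ for $\lambda\in\Gamma$, and $|l_{n,\tau}^\alpha(t)|$ is majorised by a polynomial times $e^{-\tau t/2}$. Hence the integrand $(\lambda,t)\mapsto e^{\lambda t}l_{n,\tau}^\alpha(t)(\lambda\mathbf1-A)^{-1}$ is absolutely integrable on $\Gamma\times[0,\infty)$, legitimising the interchange. I do not anticipate any deeper obstacle; the content of the statement is essentially that the Bochner-integral definition~\eqref{e:def of S} and the functional-calculus evaluation of the Laplace-type integral agree, and the Fubini/Dunford manipulation above makes this transparent. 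A shorter, equivalent route would be to pass to the Jordan form $A=PJP^{-1}$, write $e^{At}=Pe^{Jt}P^{-1}$, and integrate blockwise using Remark~\ref{r:Mavromatis} for the generalised-eigenvector terms; I would mention this alternative only if the contour argument needs to be avoided.
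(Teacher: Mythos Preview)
Your proof is correct, and the Fubini step is adequately justified (one small refinement: for non-integer $\alpha$ the function $l_{n,\tau}^\alpha$ carries a factor $t^{\alpha/2}$ rather than a pure polynomial, but since $\alpha/2>-1/2$ the majorant $t^{\alpha/2}\cdot(\text{polynomial})\cdot e^{-(\tau/2+c)t}$ is still absolutely integrable on $[0,\infty)$, so nothing changes). The paper, however, takes the alternative route you mention only at the end: it expands $e^{At}$ via the Jordan decomposition
\[
e^{At}=\sum_{k=1}^m\sum_{j=0}^{w_k-1} t^j e^{\lambda_k t}\,\frac{N_k^{\,j}}{j!},
\]
integrates term by term against $l_{n,\tau}^\alpha$, observes that $\partial_\lambda^{\,j} s_{n,\tau,\alpha,\lambda}=\int_0^\infty t^j e^{\lambda t}\,l_{n,\tau}^\alpha(t)\,dt$, and recognises the resulting finite sum as the Jordan-form expression for $f(A)$ with $f(\lambda)=s_{n,\tau,\alpha,\lambda}$. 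Your Riesz--Dunford argument is more streamlined and avoids any reference to the Jordan structure of $A$; the paper's approach is more elementary (no contour integrals) and makes the link with the generalised-eigenvector coefficients of Remark~\ref{r:Mavromatis} explicit, which fits the surrounding exposition.
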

\begin{proof}
Let $n\in\mathbb N_0$, $\tau>0$, and $\alpha>-1$ be fixed.
For brevity, we set $f(\lambda)=s_{n,\tau,\alpha,\lambda}$. From Proposition~\ref{p:s_n} it is seen that $f$ is holomorphic in the open left half-plane $\Real\lambda<0$.
We recall that
\begin{equation*}
s_{n,\tau,\alpha,\lambda}=\int_0^\infty h_{\lambda}(t)\,l_{n,\tau}^\alpha(t)\,dt
=\int_0^\infty e^{\lambda t}\,l_{n,\tau}^\alpha(t)\,dt.
\end{equation*}
From this formula, it is clear that
\begin{equation*}
\frac{\partial s_{n,\tau,\alpha,\lambda}}{\partial \lambda}=t\,s_{n,\tau,\alpha,\lambda},\qquad
\frac{\partial^j s_{n,\tau,\alpha,\lambda}}{\partial \lambda^j}=t^j\,s_{n,\tau,\alpha,\lambda}.
\end{equation*}

We recall~\cite[ch.~VII, \S~1, Theorem 5]{Dunford-Schwartz-I:eng},~\cite[ch.~1, \S~5]{Kato95:eng} that for any square matrix $A$,
\begin{equation}\label{e:mat func}
f(A)=\sum_{k=1}^m\sum_{j=0}^{w_k-1}\frac{\partial^j f}{\partial \lambda^j}(\lambda_k)\frac{N_k^{j}}{j!},
\end{equation}
where $\lambda_k$ are eigenvalues of $A$, $w_k$ are their multiplicities, and $N_k$ are spectral nilpotents, in particular, $N_k^{0}=P_k$ are spectral projectors. If all eigenvalues are simple, then
\begin{equation*}
f(A)=\sum_{k=1}^M f(\lambda_k)P_k.
\end{equation*}
For the exponential function $\lambda\mapsto e^{\lambda t}$ formula~\eqref{e:mat func} takes the form
\begin{equation}\label{e:mat exp}
e^{At}=\sum_{k=1}^m\sum_{j=0}^{w_k-1}
t^je^{\lambda_kt}\frac{N_k^{j}}{j!}.
\end{equation}
In particular, if all eigenvalues are simple, then
\begin{equation*}
e^{At}=\sum_{k=1}^M e^{\lambda_kt}P_k.
\end{equation*}

From~\eqref{e:mat exp} and~\eqref{e:def of S} it follows that
\begin{align*}
S_{n,\tau,\alpha,A}&=\int_0^\infty e^{At}\,l_{n,\tau}^\alpha(t)\,dt\\
&=\int_0^\infty \sum_{k=1}^m\sum_{j=0}^{w_k-1}
t^je^{\lambda_kt}\frac{N_k^{j}}{j!}\,l_{n,\tau}^\alpha(t)\,dt\\
&=\sum_{k=1}^m\sum_{j=0}^{w_k-1}\frac{N_k^{j}}{j!}
\int_0^\infty t^je^{\lambda_kt}\,l_{n,\tau}^\alpha(t)\,dt\\
&=\sum_{k=1}^m\sum_{j=0}^{w_k-1}\frac{N_k^{j}}{j!}
\,\frac{\partial^j s_{n,\tau,\alpha,\lambda_k}}{\partial \lambda^j},
\end{align*}
which, by~\eqref{e:mat func}, equals the function $\lambda\mapsto s_{n,\tau,\alpha,\lambda}$ of $A$.
\end{proof}

We do not discuss algorithms for calculating of the coefficients $S_{n,\tau,\alpha,A}$ in this paper. One possibility for this purpose is
Proposition~\ref{p:func cal} and Corollary~\ref{c:reccur r_n:func cal} (see below). When $S_{n,\tau,\alpha,A}$ are found, we obtain the approximation
\begin{equation*}
H_A(t)\approx \sum_{n=0}^N S_{n,\tau,\alpha,A}\,l_{n,\tau}^\alpha(t).
\end{equation*}

\section{The estimate of accuracy}\label{s:estimate}
In order for the truncated Laguerre series~\eqref{e:def of S} approximate the matrix exponential $H_A$ well enough, first of all, the truncated Laguerre series
\begin{equation*}
h_{N,\tau,\alpha,\lambda}=\sum_{n=0}^N s_{n,\tau,\alpha,\lambda}\,l_{n,\tau}^\alpha
\end{equation*}
should approximate the function $h_\lambda$ for all $\lambda\in\sigma(A)$. In this section, we discuss the inverse problem: how to estimate $\bigl\lVert H_{A}-H_{N,\tau,\alpha,A}\bigr\rVert_{L_2}$ in terms of $\bigl\lVert h_{\lambda_k}-h_{N,\tau,\alpha,\lambda_k}\bigr\rVert_{L_2}$, where $\lambda_k$ runs over the eigenvalues of $A$.

For $\Real\lambda<0$ and a natural number $N$ we denote by $\zeta(N,\tau,\alpha,\lambda)$ the square of the accuracy of the approximation of the function $h_{\lambda}$ by its $N$-truncated Laguerre series:
\begin{equation}\label{e:func zeta}
\zeta(N,\tau,\alpha,\lambda)
=\int_0^\infty \Bigl\lvert e^{\lambda t}-\sum_{n=0}^{N}\,s_{n,\tau,\alpha,\lambda}\,l_{n,\tau}^\alpha(t)\Bigr\rvert^2\,dt.
\end{equation}
Clearly, we can rewrite this formula as
\begin{equation}\label{e:zeta as a sum from N+1}
\begin{split}
\zeta(N,\tau,\alpha,\lambda)
&=\Bigl\lVert h_{\lambda}-\sum_{n=0}^{N}s_{n,\tau,\alpha,\lambda}\,l_{n,\tau}^\alpha\Bigr\rVert_{L_2}^2\\
&=\Bigl\lVert\sum_{n=N+1}^{\infty}s_{n,\tau,\lambda}\,l_{n,\tau}\Bigr\rVert^2\\
&=\sum_{n=N+1}^{\infty}|s_{n,\tau,\alpha,\lambda}|^2.
\end{split}
\end{equation}

For a matrix $C=\{C_{ij}\}\in\mathbb C^{M\times M}$, we denote by $\lVert C\rVert_{2\to2}$ the norm induced by the Euclidean norm $\lVert\cdot\rVert_2$ on $\mathbb C^M$ and by
\begin{equation*}
\lVert C\rVert_F=\sqrt{\sum_{i=1}^M\sum_{j=1}^M|C_{ij}|^2}
\end{equation*}
the Frobenius norm~\cite[p.~71]{Golub-Van_Loan13:eng}.
It is easy to show that
\begin{align*}
\lVert A\rVert_{2\to2}&\le\lVert A\rVert_F,\\
\lVert AB\rVert_F&\le\lVert A\rVert_{2\to2}\cdot\lVert B\rVert_F,\\
\lVert AB\rVert_F&\le\lVert A\rVert_F\cdot\lVert B\rVert_{2\to2},\\
\lVert Ax\rVert_2&\le\lVert A\rVert_F\cdot\lVert x\rVert_{2}.
\end{align*}
By default, we use for matrices $C\in\mathbb C^{M\times M}$ the Frobenius norm. We denote by $\sigma(C)$ the spectrum (the set of all eigenvalues) of a square matrix $C$. We recall that eigenvalues and eigenvectors are calculated~\cite{Wolfram} with high backward stability by the QR-algorithm~\cite{Golub-Van_Loan13:eng,Higham08,Voevodin-Kuznetsov:rus-eng}, so we suppose that they are known.

\begin{proposition}\label{p:G_D-G_D,N}
Let $D\in\mathbb C^{M\times M}$ be a diagonal matrix with diagonal elements $\lambda_k$, $\Real\lambda_k<0$, $k=1,2,\dots,M$. Then for the number
\begin{equation*}
\lVert H_D-H_{N,\tau,\alpha,D}\rVert_{L_2}=\sqrt{\int_{0}^{\infty}\bigl\lVert H_D(t)-H_{N,\tau,\alpha,D}(t)\bigr\rVert_F^2\,dt},
\end{equation*}
we have
\begin{equation*}
\lVert H_D-H_{N,\tau,\alpha,D}\rVert_{L_2}=\sqrt{\sum_{k=0}^{M}\zeta(N,\tau,\alpha,\lambda_k)}
\le\sqrt{M\max_{k}\zeta(N,\tau,\alpha,\lambda_k)},
\end{equation*}
where $\lambda_k$ are the eigenvalues of $D$ and the function $\zeta$ is defined by~\eqref{e:func zeta}.
\end{proposition}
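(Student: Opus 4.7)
The plan is to exploit the fact that everything in sight becomes diagonal, so the Frobenius norm decouples into a sum of scalar $L_2$-errors of the form already encoded in $\zeta$.

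First I would observe that if $D=\mathrm{diag}(\lambda_1,\dots,\lambda_M)$, then $H_D(t)=e^{Dt}=\mathrm{diag}(e^{\lambda_1 t},\dots,e^{\lambda_M t})$. Substituting this diagonal expression into the definition~\eqref{e:def of S} of the Laguerre coefficients and using that the integral of a diagonal matrix-valued function is taken entrywise, I get that each coefficient $S_{n,\tau,\alpha,D}$ is itself diagonal with $k$-th diagonal entry
\begin{equation*}
\int_0^\infty e^{\lambda_k t}\,l_{n,\tau}^\alpha(t)\,dt = s_{n,\tau,\alpha,\lambda_k}.
\end{equation*}
(Alternatively this follows immediately from Proposition~\ref{p:func cal}, since for a diagonal matrix the functional calculus acts entrywise on the diagonal.) Consequently $H_{N,\tau,\alpha,D}(t)$ is also diagonal, with $k$-th entry equal to the scalar truncated series $\sum_{n=0}^{N} s_{n,\tau,\alpha,\lambda_k}\,l_{n,\tau}^\alpha(t)$.

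Next I would take the Frobenius norm of the difference pointwise. Since $H_D(t)-H_{N,\tau,\alpha,D}(t)$ is diagonal, its squared Frobenius norm is simply the sum of squared moduli of its diagonal entries:
\begin{equation*}
\bigl\lVert H_D(t)-H_{N,\tau,\alpha,D}(t)\bigr\rVert_F^2
=\sum_{k=1}^{M}\Bigl\lvert e^{\lambda_k t}-\sum_{n=0}^{N} s_{n,\tau,\alpha,\lambda_k}\,l_{n,\tau}^\alpha(t)\Bigr\rvert^2.
\end{equation*}
Integrating over $[0,\infty)$ and swapping the (finite) sum with the integral, the inner integrals are exactly $\zeta(N,\tau,\alpha,\lambda_k)$ by definition~\eqref{e:func zeta}. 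Taking square roots yields the claimed equality
\begin{equation*}
\lVert H_D-H_{N,\tau,\alpha,D}\rVert_{L_2}=\sqrt{\sum_{k=1}^{M}\zeta(N,\tau,\alpha,\lambda_k)}.
\end{equation*}

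Finally the inequality follows from bounding every summand by the maximum: $\sum_{k=1}^{M}\zeta(N,\tau,\alpha,\lambda_k)\le M\max_k \zeta(N,\tau,\alpha,\lambda_k)$, and then taking the square root. There is no real obstacle here; the only point requiring minor care is justifying that the matrix integral defining $S_{n,\tau,\alpha,D}$ preserves the diagonal structure, which is immediate from entrywise integration of a matrix-valued $L_1$ function.
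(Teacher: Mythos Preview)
Your argument is correct and follows essentially the same route as the paper's own proof: write $H_D(t)$ and $H_{N,\tau,\alpha,D}(t)$ explicitly as diagonal matrices, compute the Frobenius norm of their difference as a sum of scalar squares, integrate and interchange the finite sum with the integral to recognize each term as $\zeta(N,\tau,\alpha,\lambda_k)$. If anything, you are slightly more explicit than the paper about why $S_{n,\tau,\alpha,D}$ is diagonal and about the trivial final inequality.
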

\begin{proof}
By assumption, the matrix $D$ has the form
\begin{equation*}
D=\begin{pmatrix}
\lambda_1& 0&\ldots& 0\\
0& \lambda_2&\ldots& 0\\
\vdots& \vdots& \ddots& \vdots\\
0& 0&\ldots& \lambda_M
\end{pmatrix}.
\end{equation*}
Therefore,
\begin{equation*}
H_D(t)=\begin{pmatrix}
h_{\lambda_1}(t)& 0&\ldots& 0\\
0& h_{\lambda_2}(t)&\ldots& 0\\
\vdots& \vdots& \ddots& \vdots\\
0& 0&\ldots& h_{\lambda_M}(t)
\end{pmatrix}
\end{equation*}
and
\begin{multline*}
H_{N,\tau,\alpha,D}(t)\\
=\begin{pmatrix}
\sum_{n=0}^{N}s_{n,\tau,\alpha,\lambda_1}\,l_{n,\tau}^\alpha(t)& 0&\ldots& 0\\
0& \sum_{n=0}^{N}s_{n,\tau,\alpha,\lambda_2}\,l_{n,\tau}^\alpha(t)&\ldots& 0\\
\vdots& \vdots& \ddots& \vdots\\
0& 0&\ldots& \sum_{n=0}^{N}s_{n,\tau,\alpha,\lambda_M}\,l_{n,\tau}^\alpha(t)
\end{pmatrix}.
\end{multline*}
Hence, by the definition of the Frobenius norm,
\begin{equation*}
\bigl\lVert H_D(t)-H_{N,\tau,\alpha,D}(t)\bigr\rVert_F^2=
\sum_{k=0}^{M}\Bigl\lvert h_{\lambda_k}(t)-\sum_{n=0}^{N}s_{n,\tau,\alpha,\lambda_k}\,l_{n,\tau}^\alpha(t)\Bigr\rvert^2.
\end{equation*}
Consequently, (recall that $\Real\lambda_k<0$)
\begin{multline*}
\sqrt{\int_{0}^{\infty}\bigl\lVert H_D(t)-H_{N,\tau,\alpha,D}(t)\bigr\rVert_F^2\,dt}\\
=\sqrt{\int_{0}^{\infty}\sum_{k=0}^{M}\Bigl\lvert h_{\lambda_k}(t)-\sum_{n=0}^{N}s_{n,\tau,\alpha,\lambda_k}\,l_{n,\tau}^\alpha(t)\Bigr\rvert^2\,dt}\\
=\sqrt{\sum_{k=0}^{M}\int_{0}^{\infty}\Bigl\lvert h_{\lambda_k}(t)-\sum_{n=0}^{N}s_{n,\tau,\alpha,\lambda_k}\,l_{n,\tau}^\alpha(t)\Bigr\rvert^2\,dt}\\
=\sqrt{\sum_{k=0}^{M}\zeta(N,\tau,\alpha,\lambda_k)}.\qed
\end{multline*}
\renewcommand\qed{}
\end{proof}

Now let us suppose that the matrix $A$ is \emph{diagonalizable}; this means that there exists an invertible matrix $T$ and a diagonal matrix $D$ such that
\begin{equation*}
A=TDT^{-1}.
\end{equation*}
In such a case, the diagonal elements of $D$ are the eigenvalues of $A$ and the columns of $T$ are the corresponding eigenvectors. Without loss of generality we can assume that the columns of $T$ have unit Euclidian norm. The matrix $D$ can be interpreted as the Jordan form of the matrix $A$; thus, a diagonalizable matrix has (complex) Jordan blocks of the size $1\times1$ only.
It is clear that for a diagonalizable matrix $A$,
\begin{align*}
H_{A}(t)&=e^{At}=Te^{Dt}T^{-1},&t&>0,\\
H_{N,\tau,\alpha,A}(t)&=\sum_{n=0}^N TS_{n,\tau,\alpha,D}T^{-1}\,l_{n,\tau}^\alpha(t),&t&>0.
\end{align*}

Recall that we use the Frobenius norm in the space $\mathbb C^{M\times M}$.

\begin{theorem}\label{t:1}
Let $A\in\mathbb C^{M\times M}$ and $\lambda_k$, $k=1,2,\dots,M$, be eigenvalues of $A$. Then
\begin{equation}\label{e:est:below}
\sqrt{\max_{k}\zeta(N,\tau,\alpha,\lambda_k)}\le\lVert H_{A}-H_{N,\tau,\alpha,A}\rVert_{L_2[0,\infty)}
\end{equation}
and {\rm(}provided that $A$ is diaginalizable{\rm)}
\begin{equation}\label{e:est:above}
\begin{split}
\lVert H_{A}-H_{N,\tau,\alpha,A}\rVert_{L_2[0,\infty)}
&\le\varkappa(T)\sqrt{\sum_{k=0}^{M}\zeta(N,\tau,\alpha,\lambda_k)}\\
&\le\varkappa(T)\sqrt{M\max_{k}\zeta(N,\tau,\alpha,\lambda_k)},
\end{split}
\end{equation}
where $\varkappa(T)=\lVert T\rVert_{2\to2}\cdot\lVert T^{-1}\rVert_{2\to2}$ is the condition number~\cite[p.~63]{Higham08} of $T$.
\end{theorem}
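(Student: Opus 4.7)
The two inequalities will be handled separately, but both rest on the same observation: by Proposition~\ref{p:func cal}, $S_{n,\tau,\alpha,A}$ is a holomorphic function of $A$, so it commutes through eigenvector actions and through similarities. This is what links the matrix-valued problem to the scalar quantities $\zeta(N,\tau,\alpha,\lambda_k)$.

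For the lower bound~\eqref{e:est:below}, note that diagonalizability is not required: each eigenvalue $\lambda_k$ of $A$ admits a unit eigenvector $v_k$. The functional-calculus identity $f(A)v_k = f(\lambda_k)v_k$ applied to $f(\lambda)=s_{n,\tau,\alpha,\lambda}$ and to the exponential gives, termwise in~\eqref{e:H_{N,tau,alpha,A}},
\[
\bigl(H_A(t)-H_{N,\tau,\alpha,A}(t)\bigr)v_k=\bigl(e^{\lambda_k t}-h_{N,\tau,\alpha,\lambda_k}(t)\bigr)v_k.
\]
Taking Euclidean norms and using $\lVert Mv\rVert_2\le\lVert M\rVert_F$ for unit $v$, I square, integrate in $t$, recognize the left-hand integral as $\zeta(N,\tau,\alpha,\lambda_k)$, and then take the maximum over $k$.

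For the upper bound~\eqref{e:est:above}, assuming $A=TDT^{-1}$, the same functional calculus yields $S_{n,\tau,\alpha,A}=T\,S_{n,\tau,\alpha,D}\,T^{-1}$ for each $n$, and hence
\[
H_A(t)-H_{N,\tau,\alpha,A}(t)=T\bigl(H_D(t)-H_{N,\tau,\alpha,D}(t)\bigr)T^{-1}.
\]
Applying the two Frobenius-versus-operator norm inequalities stated just before the theorem, pointwise in $t$, yields
\[
\lVert H_A(t)-H_{N,\tau,\alpha,A}(t)\rVert_F\le\varkappa(T)\,\lVert H_D(t)-H_{N,\tau,\alpha,D}(t)\rVert_F.
\]
Squaring, integrating in $t$, and taking the square root produces $\lVert H_A-H_{N,\tau,\alpha,A}\rVert_{L_2}\le\varkappa(T)\,\lVert H_D-H_{N,\tau,\alpha,D}\rVert_{L_2}$, after which Proposition~\ref{p:G_D-G_D,N} closes out both displayed estimates on the right of~\eqref{e:est:above}.

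I do not anticipate any serious obstacle. The only nonroutine move is recognizing that eigenvectors diagonalize $H_{N,\tau,\alpha,A}$ (for the lower bound) and that the Laguerre truncation transforms covariantly under the similarity $A=TDT^{-1}$ (for the upper bound); both are immediate consequences of Proposition~\ref{p:func cal} and the linearity of the integral defining $S_{n,\tau,\alpha,A}$. The remaining work is bookkeeping of norms under an integral, for which the estimates listed before the theorem are tailor-made.
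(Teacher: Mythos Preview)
Your proposal is correct and follows essentially the same route as the paper's proof: the lower bound comes from applying a unit eigenvector and using $\lVert Mv\rVert_2\le\lVert M\rVert_F$ pointwise in $t$, and the upper bound comes from the similarity $H_A-H_{N,\tau,\alpha,A}=T(H_D-H_{N,\tau,\alpha,D})T^{-1}$, the Frobenius--operator norm inequality $\lVert TBT^{-1}\rVert_F\le\varkappa(T)\lVert B\rVert_F$, and Proposition~\ref{p:G_D-G_D,N}. The only difference is presentational: you make the pointwise-in-$t$ step explicit before integrating, whereas the paper writes the $L_2$ chain directly.
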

\begin{proof}
Let $\lambda$ be an eigenvalue of $A$ and $v$ be the corresponding normalized eigenvector.
Since $H_{A}(t)$ and $S_{n,\tau,\alpha,\lambda}$ are respectively  the functions $\lambda\mapsto h_\lambda(t)$ and $\lambda\mapsto s_{n,\tau,\alpha,\lambda}$ of $A$ (Proposition~\ref{p:func cal}), $v$ is also the eigenvector of $H_{A}(t)$ and $S_{n,\tau,\alpha,\lambda}$, and it corresponds to the eigenvalues $h_{\lambda}(t)$ and $s_{n,\tau,\alpha,\lambda}$:
\begin{align*}
H_{A}(t)v&=h_{\lambda}(t)v, \\
H_{N,\tau,\alpha,A}(t)v&=\Bigl(\sum_{n=0}^{N}S_{n,\tau,\alpha,\lambda}\,l_{n,\tau}^\alpha(t)\Bigr)v
=\Bigl(\sum_{n=0}^{N}s_{n,\tau,\alpha,\lambda}\,l_{n,\tau}^\alpha(t)\Bigr)v.
\end{align*}

Therefore,
\begin{align*}
\bigl\lVert H_{A}-H_{N,\tau,\alpha,A}\bigr\rVert_{L_2} &\ge\bigl\lVert(H_{A}-H_{N,\tau,\alpha,A})v\bigr\rVert_{L_2}\\
&=\sqrt{\int_{0}^{\infty}\bigl\lVert \bigl(H_{A}(t)-H_{N,\tau,\alpha,A}(t)\bigr)v\bigr\rVert^2\,dt}\\
&=\sqrt{\int_{0}^{\infty}\Bigl\lVert \Bigl(h_{\lambda}(t)-\sum_{n=0}^{N}s_{n,\tau,\alpha,\lambda}\,l_{n,\tau}^\alpha(t)\Bigr)v\Bigr\rVert^2\,dt}\\
&=\sqrt{\int_{0}^{\infty}\Bigl\lvert h_{\lambda}(t)-\sum_{n=0}^{N}s_{n,\tau,\alpha,\lambda}(t)\,l_{n,\tau}^\alpha\Bigr\rvert^2
\cdot\lVert v\rVert^2\,dt}\\
&=\sqrt{\int_{0}^{\infty}\Bigl\lvert h_{\lambda}(t)-\sum_{n=0}^{N}s_{n,\tau,\alpha,\lambda}(t)\,l_{n,\tau}^\alpha    \Bigr\rvert^2}\,dt\\
&=\sqrt{\zeta(N,\tau,\alpha,\lambda)}.
\end{align*}
From this inequality, it follows estimate~\eqref{e:est:below}.

Estimate~\eqref{e:est:above} follows from Proposition~\ref{p:G_D-G_D,N} and the inequality
\begin{equation*}
\lVert TBT^{-1}\rVert_F\le\lVert T\rVert_{2\to2}\cdot\lVert B\rVert_F\cdot\lVert T^{-1}\rVert_{2\to2}=\varkappa(T)\cdot\lVert B\rVert_F.\qed
\end{equation*}
\renewcommand\qed{}
\end{proof}

\section{Derivatives with respect to $\tau$}\label{s:partial tau}
Derivatives with respect to $\tau$ of some of the involved functions have simple representations. This can help to find extreme points. In this section, we present relevant statements.

\begin{proposition}[{\rm see \cite{Belt-Brinker97,Brinker-Belt95,Clowes65}}]\label{p:Brinker:l}
We have
\begin{equation*}
\frac{\partial l_{n,\tau}^\alpha}{\partial\tau}(t)=d_{n+1}\,l_{n+1,\tau}^\alpha(t)
-d_{n}l_{n-1,\tau}^\alpha(t),\qquad n=0,1,\dots,
\end{equation*}
where $l_{-1,\tau}^\alpha(t)=0$ and
\begin{equation*}
d_0=0,\qquad d_{n}=\frac{\sqrt{n(n+\alpha)}}{2\tau}.
\end{equation*}
In particular, for $\alpha=0$,
\begin{equation*}
d_{n}=\frac{n}{2\tau}.
\end{equation*}
\end{proposition}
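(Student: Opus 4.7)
The plan is to differentiate the explicit formula
$$l_{n,\tau}^\alpha(t)=C_n\,\tau^{(\alpha+1)/2}\,t^{\alpha/2}\,e^{-\tau t/2}\,L_n^\alpha(\tau t),\qquad C_n=\sqrt{\tfrac{n!}{\Gamma(n+\alpha+1)}},$$
directly with respect to $\tau$ and then apply two standard identities for generalized Laguerre polynomials to collapse the result into a combination of $l_{n\pm1,\tau}^\alpha$. Writing $s=\tau t$ for brevity, the product rule gives three contributions: one from the prefactor $\tau^{(\alpha+1)/2}$, one from the exponential $e^{-s/2}$, and one from the chain-rule differentiation of $L_n^\alpha(s)$ with respect to $\tau$. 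After factoring out $C_n\,t^{\alpha/2}\tau^{(\alpha+1)/2}e^{-s/2}/(2\tau)$ and replacing $t$ by $s/\tau$ everywhere, the bracket that remains is
$$(\alpha+1)\,L_n^\alpha(s)-s\,L_n^\alpha(s)+2s\,(L_n^\alpha)'(s).$$

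The next step is to reduce this bracket. I would use the derivative identity $s(L_n^\alpha)'(s)=nL_n^\alpha(s)-(n+\alpha)L_{n-1}^\alpha(s)$ to eliminate the derivative, yielding
$$(2n+\alpha+1)\,L_n^\alpha(s)-s\,L_n^\alpha(s)-2(n+\alpha)\,L_{n-1}^\alpha(s),$$
and then the three-term recurrence $(n+1)L_{n+1}^\alpha(s)=(2n+\alpha+1-s)L_n^\alpha(s)-(n+\alpha)L_{n-1}^\alpha(s)$ to rewrite the first two terms, which cleanly produces
$$(n+1)\,L_{n+1}^\alpha(s)-(n+\alpha)\,L_{n-1}^\alpha(s).$$

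Finally I would convert the two resulting pieces back to Laguerre functions by dividing and multiplying by the appropriate normalization constants $C_{n\pm1}$. The ratios $C_n/C_{n+1}=\sqrt{(n+\alpha+1)/(n+1)}$ and $C_n/C_{n-1}=\sqrt{n/(n+\alpha)}$ combine with the prefactors $(n+1)$ and $(n+\alpha)$ to give exactly $\sqrt{(n+1)(n+\alpha+1)}$ and $\sqrt{n(n+\alpha)}$, respectively, divided by $2\tau$. This matches the claimed formula with $d_{n+1}=\sqrt{(n+1)(n+\alpha+1)}/(2\tau)$ and $d_n=\sqrt{n(n+\alpha)}/(2\tau)$; the edge case $n=0$ is handled by the convention $l_{-1,\tau}^\alpha\equiv0$ together with $d_0=0$, consistent with the identity $s(L_0^\alpha)'(s)=0=0\cdot L_0^\alpha(s)-\alpha L_{-1}^\alpha(s)$.

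The computation is mechanical once the two Laguerre identities are invoked; the only mildly delicate point is keeping track of the three chain-rule contributions and the normalization ratios so that the $\tau$-factors and the square-root factors assemble into the symmetric form $d_n=\sqrt{n(n+\alpha)}/(2\tau)$. No genuine obstacle is expected.
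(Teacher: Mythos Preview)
Your argument is correct and follows the same overall strategy as the paper: differentiate the defining formula~\eqref{e:Laguerre functions:alpha} and reduce via standard Laguerre identities. The paper's proof is a one-line hint that invokes the order-raising derivative formula $(L_n^\alpha)'=-L_{n-1}^{\alpha+1}$ from Szeg\H{o}, whereas you use the equivalent pair $s(L_n^\alpha)'=nL_n^\alpha-(n+\alpha)L_{n-1}^\alpha$ together with the three-term recurrence; these are related by a contiguous relation, so the difference is cosmetic, and your version has the advantage of staying at fixed~$\alpha$ throughout, which makes the conversion back to $l_{n\pm1,\tau}^\alpha$ immediate.
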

\begin{proof}
The proof follows from~\eqref{e:Laguerre functions:alpha} and the well-known~\cite[formula (5.1.14)]{Szego75:eng} formula
\begin{equation*}
\frac{\partial L_{n,t}^\alpha}{\partial t}=-L_{n-1,t}^{\alpha+1}.\qed
\end{equation*}
\renewcommand\qed{}
\end{proof}

\begin{corollary}\label{c:Brinker:s}
For Laguerre coefficients~\eqref{e:s_{n,tau,alpha,lambda}}, we have
\begin{equation*}
\frac{\partial s_{n,\tau,\alpha,\lambda}}{\partial \tau}=d_{n+1}\,s_{n+1,\tau,\alpha,\lambda}
-d_{n}\,s_{n-1,\tau,\alpha,\lambda},\qquad n=0,1,\dots.
\end{equation*}
\end{corollary}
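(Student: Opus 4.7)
The plan is to differentiate the defining integral
\begin{equation*}
s_{n,\tau,\alpha,\lambda}=\int_0^\infty e^{\lambda t}\,l_{n,\tau}^\alpha(t)\,dt
\end{equation*}
with respect to $\tau$ under the integral sign and then invoke Proposition~\ref{p:Brinker:l} directly.

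First, I would justify the interchange of differentiation and integration. For fixed $\lambda$ with $\Real\lambda<0$ and any fixed $\tau_0>0$, on a small closed neighbourhood $[\tau_0-\delta,\tau_0+\delta]\subset(0,\infty)$ the explicit formula~\eqref{e:Laguerre functions:alpha} shows that both $l_{n,\tau}^\alpha(t)$ and $\partial l_{n,\tau}^\alpha(t)/\partial\tau$ are, up to polynomial factors in $t$ and $\tau$, dominated by $e^{-(\tau_0-\delta)t/2}$. Multiplying by $e^{\lambda t}$ (whose modulus equals $e^{(\Real\lambda)t}$) gives an integrable dominant independent of $\tau$ in that neighbourhood, so the Leibniz rule applies.

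Second, with differentiation under the integral justified, I substitute Proposition~\ref{p:Brinker:l}:
\begin{equation*}
\frac{\partial s_{n,\tau,\alpha,\lambda}}{\partial\tau}
=\int_0^\infty e^{\lambda t}\,\frac{\partial l_{n,\tau}^\alpha}{\partial\tau}(t)\,dt
=\int_0^\infty e^{\lambda t}\,\bigl[d_{n+1}\,l_{n+1,\tau}^\alpha(t)-d_n\,l_{n-1,\tau}^\alpha(t)\bigr]\,dt.
\end{equation*}
Splitting by linearity and recognising the defining integrals of $s_{n+1,\tau,\alpha,\lambda}$ and $s_{n-1,\tau,\alpha,\lambda}$ yields the claimed identity. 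The boundary case $n=0$ is covered by the conventions $l_{-1,\tau}^\alpha\equiv 0$ (whence $s_{-1,\tau,\alpha,\lambda}=0$) and $d_0=0$ inherited from Proposition~\ref{p:Brinker:l}, so no separate argument is needed.

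There is no real obstacle here: the corollary is essentially a formal consequence of the recurrence for $\partial_\tau l_{n,\tau}^\alpha$ combined with the fact that the coefficients $s_{n,\tau,\alpha,\lambda}$ are defined as $L_2$-pairings of $h_\lambda$ with $l_{n,\tau}^\alpha$. The only point requiring verification is the exponential domination used to pass the derivative past the integral, which follows immediately from $\Real\lambda<0$ and the factor $e^{-\tau t/2}$ in~\eqref{e:Laguerre functions:alpha}.
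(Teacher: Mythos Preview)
Your argument is correct and is exactly the approach the paper takes: the paper's own proof is the single line ``It follows directly from~\eqref{e:s via int} and Proposition~\ref{p:Brinker:l},'' and you have simply spelled out the justification for differentiating under the integral sign that the paper leaves implicit. One small wording point: for general $\alpha$ the factor $t^{\alpha/2}$ in~\eqref{e:Laguerre functions:alpha} is not polynomial, so your dominant should read ``$t^{\alpha/2}$ times a polynomial in $t$'' rather than ``polynomial factors in $t$''; the integrability near $0$ then uses $\alpha/2>-1$.
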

\begin{proof}
It follows directly from~\eqref{e:s via int} and Proposition~\ref{p:Brinker:l}.
\end{proof}

\begin{corollary}\label{c:Brinker:xi}
For function~\eqref{e:func zeta}, we have
\begin{align*}
\frac{\partial \zeta(N,\tau,\alpha,\lambda)}{\partial\tau}
&=-d_{N+1}\bigl(s_{N+1,\tau,\alpha,\lambda}\,s_{N,\tau,\alpha,\bar\lambda}+s_{N+1,\tau,\alpha,\bar\lambda}\,s_{N,\tau,\alpha,\lambda}\bigr)\\
&=-2d_{N+1}\Real\bigl(s_{N+1,\tau,\alpha,\lambda}\,s_{N,\tau,\alpha,\bar\lambda}\bigr),
\end{align*}
where the bar over $\lambda$ means complex conjugate of $\lambda$.
\end{corollary}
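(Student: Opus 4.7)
The plan is to start from the series representation~\eqref{e:zeta as a sum from N+1} of $\zeta$, differentiate it term by term, insert the three-term recurrence for $\partial_\tau s_{n,\tau,\alpha,\lambda}$ supplied by Corollary~\ref{c:Brinker:s}, and observe that the resulting double sum telescopes, leaving only a single boundary contribution at $n=N+1$. The identity~\eqref{e:bar s} is needed both to turn $|s_{n,\tau,\alpha,\lambda}|^2$ into a product of holomorphic factors (so the Leibniz rule produces clean $s$-coefficients rather than their conjugates) and, at the end, to collapse the two surviving terms into $2\Real(\cdot)$.

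More concretely, I would write
\begin{equation*}
\zeta(N,\tau,\alpha,\lambda)=\sum_{n=N+1}^\infty s_{n,\tau,\alpha,\lambda}\,s_{n,\tau,\alpha,\bar\lambda},
\end{equation*}
using~\eqref{e:bar s}, and abbreviate $a_n=s_{n,\tau,\alpha,\lambda}$, $b_n=s_{n,\tau,\alpha,\bar\lambda}$. Differentiating termwise and substituting the recurrence $\partial_\tau a_n=d_{n+1}a_{n+1}-d_n a_{n-1}$ (and likewise for $b_n$) produces
\begin{equation*}
\frac{\partial \zeta}{\partial\tau}=\sum_{n=N+1}^\infty d_{n+1}(a_{n+1}b_n+a_nb_{n+1})-\sum_{n=N+1}^\infty d_n(a_{n-1}b_n+a_nb_{n-1}).
\end{equation*}
Setting $c_n=a_nb_{n-1}+a_{n-1}b_n$, the right-hand side becomes $\sum_{n\ge N+1}(d_{n+1}c_{n+1}-d_nc_n)$, which telescopes to $-d_{N+1}c_{N+1}$; this is exactly the claimed first expression. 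The second expression then follows from the observation $a_Nb_{N+1}=\overline{a_{N+1}b_N}$, again via~\eqref{e:bar s}.

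The only real technical point is justifying the termwise differentiation. Since $\Real\lambda<0$, Proposition~\ref{p:s_n} shows that each $s_{n,\tau,\alpha,\lambda}$ is a smooth function of $\tau>0$, and by Parseval $\sum_n|s_{n,\tau,\alpha,\lambda}|^2$ equals $\lVert h_\lambda\rVert_{L_2}^2=1/(-2\Real\lambda)$ uniformly on any compact $\tau$-interval; a standard dominated-convergence/uniform-convergence argument on such a compact interval therefore legitimates differentiating under the summation. Apart from that, the proof is a bookkeeping exercise: the possible pitfall is mis-indexing the two shifted sums, so the telescoping identity should be checked carefully, but the argument is otherwise routine.
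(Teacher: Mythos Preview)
Your proposal is correct and follows essentially the same route as the paper's own proof: write $\zeta$ as $\sum_{n\ge N+1}s_{n,\tau,\alpha,\lambda}s_{n,\tau,\alpha,\bar\lambda}$ via~\eqref{e:bar s}, differentiate termwise, substitute the recurrence of Corollary~\ref{c:Brinker:s}, and cancel. Your telescoping bookkeeping with $c_n$ makes explicit what the paper dismisses as ``After canceling,'' and your remark on justifying termwise differentiation goes beyond what the paper supplies; note, though, that the telescoping also tacitly requires $d_{M+1}c_{M+1}\to0$ as $M\to\infty$, which deserves the same sort of justification.
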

\begin{proof}
We make use of representations~\eqref{e:zeta as  a sum from N+1} and~\eqref{e:bar s}:
\begin{equation*}
\zeta(N,\tau,\alpha,\lambda)=\sum_{n=N+1}^{\infty}|s_{n,\tau,\alpha,\lambda}|^2
=\sum_{n=N+1}^{\infty}s_{n,\tau,\alpha,\lambda}\,\overline{s_{n,\tau,\alpha,\lambda}}
=\sum_{n=N+1}^{\infty}s_{n,\tau,\alpha,\lambda}\,s_{n,\tau,\alpha,\bar\lambda}.
\end{equation*}
Differentiating the last formula, we obtain
\begin{equation*}
\frac{\partial \zeta(N,\tau,\alpha,\lambda}{\partial\tau}
=\sum_{n=N+1}^{\infty}
\frac{\partial s_{n,\tau,\alpha,\lambda}}{\partial\tau}\,s_{n,\tau,\alpha,\bar\lambda}+
s_{n,\tau,\alpha,\lambda}\,\frac{\partial s_{n,\tau,\alpha,\bar\lambda}}{\partial\tau}.
\end{equation*}
Then from Corollary~\ref{c:Brinker:s} it follows
\begin{align*}
\frac{\partial \zeta(N,\tau,\alpha,\lambda}{\partial\tau}
&=\sum_{n=N+1}^{\infty}
\bigl(d_{n+1}\,s_{n+1,\tau,\alpha,\lambda}
-d_{n}\,s_{n-1,\tau,\alpha,\lambda}\bigr)\,s_{n,\tau,\alpha,\bar\lambda}\\
&+
s_{n,\tau,\alpha,\lambda}\,\bigl(d_{n+1}\,s_{n+1,\tau,\alpha,\bar\lambda}
-d_{n}\,s_{n-1,\tau,\alpha,\bar\lambda}\bigr)\\
&=\sum_{n=N+1}^{\infty}
\bigl(d_{n+1}\,s_{n+1,\tau,\alpha,\lambda}\,s_{n,\tau,\alpha,\bar\lambda}
-d_{n}\,s_{n,\tau,\alpha,\bar\lambda}\,s_{n-1,\tau,\alpha,\lambda}\\
&+
d_{n+1}\,s_{n+1,\tau,\alpha,\bar\lambda}\,s_{n,\tau,\alpha,\lambda}
-d_{n}\,s_{n,\tau,\alpha,\lambda}\,s_{n-1,\tau,\alpha,\bar\lambda}\bigr).
\end{align*}
After canceling we obtain the desired representation.
\end{proof}

\section{The case $\alpha=0$}\label{s:alpha=0}
Our numerical experiments (see Section~\ref{s:num ex}) show that often the optimal value of $\alpha$ is close to 0. For this reason, we treated the case of $\alpha=0$ as a special one in the previous exposition. In this section, we collect some additional formulas related to $\alpha=0$. These formulas and Corollary~\ref{c:Brinker:xi} allow one to organize calculations for the case $\alpha=0$ substantially simpler and faster than for the general case. Thus, by taking $\alpha$ to be equal 0 (though the optimal $\alpha$ is only close to 0), we can take a larger number $N$ of terms in the truncated Laguerre series~\eqref{e:H_{N,tau,alpha,A}} and thereby compensate for the small loss of accuracy caused by a notoptimal value of $\alpha$.

\begin{proposition}\label{p:reccur r_n}
Let $\Real\lambda<0$. Then the Laguerre coefficients $s_{n,\tau,0,\lambda}$ can be calculated recursively{\rm :}
\begin{align*}
s_{0,\tau,0,\lambda}&=-\frac{2\sqrt{\tau }}{2\lambda-\tau},\\
s_{n+1,\tau,0,\lambda}&=\frac{2\lambda+\tau}{2\lambda-\tau}\,s_{n,\tau,0,\lambda}.
\end{align*}
\end{proposition}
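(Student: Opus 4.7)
The plan is to observe that this proposition is essentially an immediate corollary of Proposition~\ref{p:s_n}. That proposition, specialized to $\alpha=0$, already records the closed form
\begin{equation*}
s_{n,\tau,0,\lambda}=-\frac{2\sqrt{\tau}\,(2\lambda+\tau)^n}{(2\lambda-\tau)^{n+1}},\qquad n=0,1,\dots,
\end{equation*}
so there is nothing left to compute beyond specializing this formula.

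First, I would set $n=0$ in the displayed expression. Since $(2\lambda+\tau)^0=1$ and $(2\lambda-\tau)^{1}=2\lambda-\tau$, this yields
\begin{equation*}
s_{0,\tau,0,\lambda}=-\frac{2\sqrt{\tau}}{2\lambda-\tau},
\end{equation*}
which is the claimed base case. Second, I would form the ratio $s_{n+1,\tau,0,\lambda}/s_{n,\tau,0,\lambda}$ from the same formula; the common factor $-2\sqrt{\tau}$ cancels, and the powers collapse to
\begin{equation*}
\frac{s_{n+1,\tau,0,\lambda}}{s_{n,\tau,0,\lambda}}=\frac{(2\lambda+\tau)^{n+1}/(2\lambda-\tau)^{n+2}}{(2\lambda+\tau)^{n}/(2\lambda-\tau)^{n+1}}=\frac{2\lambda+\tau}{2\lambda-\tau},
\end{equation*}
which rearranges to the asserted recursion. (Note that $\Real\lambda<0$ ensures $2\lambda-\tau\neq0$, so all divisions are legitimate.)

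There is no real obstacle here; the work was already done in Proposition~\ref{p:s_n}. If one preferred a self-contained derivation, an alternative route would be to specialize the hypergeometric representation: for $\alpha=0$ the factor ${}_2F_1(-n,1,1,z)$ reduces via the identity ${}_2F_1(a,b,b,z)=(1-z)^{-a}$ to $(1-z)^n$, and a direct manipulation of $1-\tau/(\tau/2-\lambda)=-(2\lambda+\tau)/(2\lambda-\tau)$ recovers the same closed form, from which the recursion again drops out by taking a quotient. Either way, the argument is a one-line verification.
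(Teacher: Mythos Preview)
Your proposal is correct and takes exactly the same approach as the paper: the paper's proof consists of the single sentence ``It follows from Proposition~\ref{p:s_n},'' and you have merely spelled out the two-line specialization that this entails.
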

\begin{proof}
It follows from Proposition~\ref{p:s_n}.
\end{proof}

\begin{corollary}\label{c:reccur r_n:func cal}
Let the spectrum of $A$ lie in the left half-plane. Then
the Laguerre coefficients $S_{n,\tau,0,A}$ can be calculated recursively{\rm :}
\begin{align*}
S_{0,\tau,0,A}&=-2\sqrt{\tau }(2A-\tau\mathbf1)^{-1},\\
S_{n+1,\tau,0,A}&=(2A+\tau\mathbf1)(2A-\tau\mathbf1)^{-1}\,S_{n,\tau,0,A}.
\end{align*}
\end{corollary}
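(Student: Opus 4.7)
The plan is to lift the scalar recursion of Proposition~\ref{p:reccur r_n} to the matrix level via the holomorphic functional calculus already invoked in Proposition~\ref{p:func cal}. Set $f_n(\lambda)=s_{n,\tau,0,\lambda}$. By Proposition~\ref{p:func cal}, $S_{n,\tau,0,A}=f_n(A)$, the operator obtained by substituting $A$ into $f_n$ via formula~\eqref{e:mat func}.

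First, I would verify that the relevant rational functions are holomorphic on a neighborhood of $\sigma(A)$. Because $A$ is stable, every eigenvalue $\lambda_k$ satisfies $\Real\lambda_k<0$, whereas the only pole of $\lambda\mapsto 1/(2\lambda-\tau)$ is $\tau/2>0$. Hence $\lambda\mapsto 1/(2\lambda-\tau)$, $\lambda\mapsto(2\lambda+\tau)/(2\lambda-\tau)$, and each $f_n$ are holomorphic on an open left half-plane containing $\sigma(A)$, so the functional calculus~\eqref{e:mat func} applies to all of them. Moreover, $2A-\tau\mathbf{1}$ is invertible with inverse given by substitution of $\lambda\mapsto 1/(2\lambda-\tau)$ into $A$.

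Next I would argue that the holomorphic (Dunford) functional calculus is an algebra homomorphism on the ring of functions holomorphic in a neighborhood of $\sigma(A)$; this is the standard fact recalled in~\cite[ch.~VII, \S~1]{Dunford-Schwartz-I:eng}. Consequently, applying the calculus to both sides of the scalar identities from Proposition~\ref{p:reccur r_n},
\begin{equation*}
f_0(\lambda)=-2\sqrt{\tau}\cdot\frac{1}{2\lambda-\tau},\qquad f_{n+1}(\lambda)=\frac{2\lambda+\tau}{2\lambda-\tau}\,f_n(\lambda),
\end{equation*}
yields
\begin{equation*}
f_0(A)=-2\sqrt{\tau}\,(2A-\tau\mathbf{1})^{-1},\qquad f_{n+1}(A)=(2A+\tau\mathbf{1})(2A-\tau\mathbf{1})^{-1}\,f_n(A),
\end{equation*}
which is exactly the claimed recursion once we rewrite $f_n(A)$ as $S_{n,\tau,0,A}$.

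No serious obstacle is anticipated: the only subtle point is the multiplicativity of the functional calculus and the invertibility of $2A-\tau\mathbf{1}$, both of which follow from $\sigma(A)\subset\{\Real\lambda<0\}$ and $\tau>0$. If one prefers to avoid quoting the Dunford calculus in full generality, the same conclusion can be reached more concretely by plugging the Jordan-form expansion~\eqref{e:mat func} into the scalar recursion: the derivatives $\partial^j f_{n+1}/\partial\lambda^j$ at each $\lambda_k$ are obtained from $\partial^j f_n/\partial\lambda^j$ by the Leibniz rule applied to the factor $(2\lambda+\tau)/(2\lambda-\tau)$, which again matches the matrix formula because $N_k$ commutes with $(2A-\tau\mathbf{1})^{-1}$ and with $(2A+\tau\mathbf{1})$.
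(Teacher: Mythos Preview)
Your argument is correct and is exactly the approach the paper takes: it derives the corollary from Propositions~\ref{p:reccur r_n} and~\ref{p:func cal}, i.e., by applying the functional calculus to the scalar recursion. You have simply spelled out in detail (holomorphy on a neighborhood of $\sigma(A)$, invertibility of $2A-\tau\mathbf1$, multiplicativity of the calculus) what the paper leaves implicit in its one-line proof.
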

\begin{proof}
The proof follows from Propositions~\ref{p:reccur r_n} and~\ref{p:func cal}.
\end{proof}

It is convenient to use Corollary~\ref{c:reccur r_n:func cal} for calculating $S_{n,\tau,0,A}$.

\begin{corollary}\label{c:zeta}
Let $\Real\lambda<0$. Then
\begin{align*}
\zeta(N,\tau,0,\lambda)
&=\frac{4\tau}{|2\lambda-\tau|^2}\cdot
\frac{\Bigl|\dfrac{2\lambda+\tau}{2\lambda-\tau}\Bigr|^{2N+2}} {1-\Bigl|\dfrac{2\lambda+\tau}{2\lambda-\tau}\Bigr|^2}\\
&=\frac{4\tau}{|2\lambda-\tau|^2-|2\lambda+\tau|^2}
\Bigl|\frac{2\lambda+\tau}{2\lambda-\tau}\Bigr|^{2N+2}.
\end{align*}
\end{corollary}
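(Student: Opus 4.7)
The plan is to combine the closed-form expression for the Laguerre coefficients $s_{n,\tau,0,\lambda}$ with the Parseval-type representation of $\zeta$ and then sum a geometric series.

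First, from Proposition~\ref{p:s_n} (or equivalently from the recursion of Proposition~\ref{p:reccur r_n} solved explicitly), one has
\begin{equation*}
s_{n,\tau,0,\lambda}=-\frac{2\sqrt{\tau}}{2\lambda-\tau}\Bigl(\frac{2\lambda+\tau}{2\lambda-\tau}\Bigr)^{n},\qquad n=0,1,\dots,
\end{equation*}
so that
\begin{equation*}
|s_{n,\tau,0,\lambda}|^2=\frac{4\tau}{|2\lambda-\tau|^2}\,\Bigl|\frac{2\lambda+\tau}{2\lambda-\tau}\Bigr|^{2n}.
\end{equation*}

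Next, I invoke representation~\eqref{e:zeta as a sum from N+1}, which gives
\begin{equation*}
\zeta(N,\tau,0,\lambda)=\sum_{n=N+1}^{\infty}|s_{n,\tau,0,\lambda}|^2
=\frac{4\tau}{|2\lambda-\tau|^2}\sum_{n=N+1}^{\infty}\Bigl|\frac{2\lambda+\tau}{2\lambda-\tau}\Bigr|^{2n}.
\end{equation*}
Before summing I must check that the ratio $q=\bigl|(2\lambda+\tau)/(2\lambda-\tau)\bigr|^2$ satisfies $q<1$. A direct expansion yields
\begin{equation*}
|2\lambda-\tau|^2-|2\lambda+\tau|^2=(2\Real\lambda-\tau)^2-(2\Real\lambda+\tau)^2=-8\tau\Real\lambda,
\end{equation*}
which is strictly positive since $\Real\lambda<0$ and $\tau>0$. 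This is the only nontrivial checkpoint in the argument, and it is essentially a one-line computation.

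With $q<1$ confirmed, the geometric series sums to
\begin{equation*}
\sum_{n=N+1}^{\infty}q^{n}=\frac{q^{N+1}}{1-q},
\end{equation*}
which gives the first of the two claimed expressions. The second expression follows by absorbing $|2\lambda-\tau|^2(1-q)=|2\lambda-\tau|^2-|2\lambda+\tau|^2$ into the leading factor. I expect no real obstacles: the proof is a routine consolidation of Proposition~\ref{p:reccur r_n} with~\eqref{e:zeta as a sum from N+1} once the convergence of the geometric series is verified.
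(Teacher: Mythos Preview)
Your proof is correct and follows precisely the route the paper intends: the paper's own proof consists of the single line ``immediately follows from Proposition~\ref{p:reccur r_n}'', which amounts to inserting the closed form of $s_{n,\tau,0,\lambda}$ into representation~\eqref{e:zeta as a sum from N+1} and summing the resulting geometric series. Your explicit verification that $|2\lambda-\tau|^2-|2\lambda+\tau|^2=-8\tau\Real\lambda>0$ simply makes rigorous the convergence that the paper leaves implicit.
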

\begin{proof}
The proof immediately follows from Proposition~\ref{p:reccur r_n}.
\end{proof}

\section{The choice of $\tau$ and $\alpha$}\label{s:opt choice}
In this section, we propose an algorithm for choosing $\tau$ and $\alpha$.

Let a stable matrix $A$ be given. By means of the Jordan decomposition, we calculate eigenvalues and eigenvectors of $A$. Typically, at least due to rounding errors, the spectrum of $A$ is simple, moreover, all eigenvalues $\lambda_k$ are distinct. If the spectrum of $A$ is not simple, the proposed algorithm for choosing $\tau$ and $\alpha$ also works, but less can be said about the approximation accuracy.

We take a number $N\in\mathbb N$. For example, we take $N=10$. We do not use $N>50$ because of large rounding errors.

We consider the function
\begin{equation*}
\varphi(N,\tau,\alpha)=\sum_{k=0}^{M}\zeta(N,\tau,\alpha,\lambda_k),
\end{equation*}
where $\lambda_k$ are the eigenvalues of $A$ and $\zeta$ is defined by~\eqref{e:func zeta}.

First, we consider the case $\alpha=0$. By Corollary~\ref{c:Brinker:xi}, we have
\begin{equation*}
\frac{\partial \zeta(N,\tau,0,\lambda}{\partial\tau}
=-2d_{N+1}\Real\bigl(s_{N+1,\tau,0,\lambda}\,s_{N,\tau,0,\bar\lambda}\bigr).
\end{equation*}
From Proposition~\ref{p:s_n} we know that
\begin{equation*}
s_{n,\tau,0,\lambda}=-\frac{2\sqrt{\tau} (2\lambda+\tau)^n}{(2\lambda-\tau)^{n+1}},\qquad n=0,1,\dots.
\end{equation*}
Therefore,
\begin{equation*}
\frac{\partial\zeta(N,\tau,0,\lambda}{\partial\tau}
=-2d_{N+1}\Real\Bigl(
\frac{2\sqrt{\tau}(2\lambda+\tau)^{N+1}}{(2\lambda-\tau)^{N+2}}\,
\frac{2\sqrt{\tau}(2\bar\lambda+\tau)^{N}}{(2\bar\lambda-\tau)^{N+1}}\Bigr).
\end{equation*}
Finally, we arrive at
\begin{equation*}
\frac{\partial\varphi(N,\tau,0)}{\partial\tau}=-2d_{N+1}\sum_{k=0}^{M}
\Real\Bigl(
\frac{2\sqrt{\tau}(2\lambda_k+\tau)^{N+1}}{(2\lambda_k-\tau)^{N+2}}\,
\frac{2\sqrt{\tau}(2\bar\lambda_k+\tau)^{N}}{(2\bar\lambda_k-\tau)^{N+1}}\Bigr).
\end{equation*}
Numerical experiments show that the function $\tau\mapsto\frac{\partial\varphi(N,\tau,0)}{\partial\tau}$ increases. Hence the function $\tau\mapsto\varphi(N,\tau,0)$ has a unique minimum. We find it by solving the equation
\begin{equation*}
\frac{\partial\varphi(N,\tau,0)}{\partial\tau}=0
\end{equation*}
for $\tau$ (in `Mathematica'~\cite{Wolfram} it is done by the command \verb"FindRoot"; this command works iteratively; we take for the initial value $\tau=1$). It can be done quite quickly for values of $N$ up to 50. Thus, we find the optimal $\tau$ for the case $\alpha=0$. Let us denote it by $\tau_0$. After that we calculate $\varphi(N,\tau_0,0)$ using Corollary~\ref{c:zeta}  and the definition of $\varphi$.

Then we calculate symbolically $\varphi(N,\tau,\alpha)$ using formulas~\eqref{e:s_{n,tau,alpha,lambda}} and~\eqref{e:zeta as a sum from N+1}. Of course, the resulting formula is rather cumbersome.
We calculate (in `Mathe\-ma\-tica'~\cite{Wolfram} it is done by the command \verb"FindMinimum")
\begin{equation*}
\varphi_{\min}(N)=\min_{\tau>0,\,\alpha>-1}\varphi(N,\tau,\alpha).
\end{equation*}
We take only $N\le12$, because the calculations are notedly slow for greater $N$.
We take the found point of minimum $(\tau_1,\alpha_1)$ as the optimal values of $\tau$ and $\alpha$.
We use $\varphi_{\min}$ for the estimates of $\lVert H_{A}-H_{N,\tau,\alpha,A}\rVert_{L_2[0,\infty)}$ according to Theorem~\ref{t:1}; for the same aim, we also calculate
\begin{equation*}
\psi(N,\tau_1,\alpha_1)=\max_{\lambda_k\in\sigma(A)}\zeta(N,\tau_1,\alpha_1,\lambda_k)
\end{equation*}
for the found $\tau_1$ and $\alpha_1$.

Our numerical experiments (see Section~\ref{s:num ex}) show that the pair $\tau_0$ and $\alpha_0=0$ is often almost optimal. So, the consideration of $\alpha\neq0$ is not always necessarily.

\section{Numerical experiments}\label{s:num ex}
In this section, we describe two numerical examples.

\begin{example}\label{ex:1}
We consider the discrete model of a transmission line shown in fig~\ref{f:rcln}. We assume that the line consists of $n=150$ sections. Thus we have 150 unknown currents $I_C$ and 150 unknown voltages $U_L$. The parameters are as follows: $C=C_0/n$, $L=L_0/n$, $R=R_0/n$, $G=G_0/n$, where $C_0=10$, $L_0=50$, $R_0=170$, $G_0=160$. The modified nodal~\cite{Vlach-Singhal94} description of the circuit has the form $\dot x(t)=Ax(t)+f(t)$ with a matrix $A$ of the size $300\times300$. The spectrum of $-A$ is shown in fig.~\ref{f:line20}.

First, we take $N=10$ in the truncated Laguerre series~\eqref{e:H_{N,tau,alpha,A}}.
We calculate the minimum of $\varphi$ over $\tau$ for $\alpha=0$; as the initial value of $\tau$ we take $\tau=1$. We obtain the following results (see the left fig.~\ref{f:line20}). The optimal $\tau$ is $\tau_0=19.2$ (it is shown on the left fig.~\ref{f:line20} as a small square); $\sqrt{\varphi(10,\tau_0,0)}=0.00104$; $\varkappa(T)=28.4$; $\sqrt{M}=17.3$. According to Theorem~\ref{t:1} we have
\begin{align*}
\lVert H_{A}-H_{10,\tau_0,0,A}\rVert_{L_2[0,\infty)}&\ge\sqrt{\psi(10,\tau_0,0)}=0.000192,\\
\lVert H_{A}-H_{10,\tau_0,0,A}\rVert_{L_2[0,\infty)}
&\le\varkappa(T)\,\sqrt{\varphi(10,\tau_0,0)}=0.0294,\\
\lVert H_{A}-H_{10,\tau_0,0,A}\rVert_{L_2[0,\infty)}\
&\le\varkappa(T)\sqrt{M\,\psi(10,\tau_0,0)}=0.0945.
\end{align*}

Second, we repeat the same experiment with $N=30$. The result is as follows: $\tau_0$ is $19.3$, but $\sqrt{\varphi(30,\tau_0,0)}=4.21\cdot10^{-8}$, and
\begin{align*}
\lVert H_{A}-H_{30,\tau_0,0,A}\rVert_{L_2[0,\infty)}&\ge\sqrt{\psi(30,\tau_0,0)}=8.33\cdot10^{-9},\\
\lVert H_{A}-H_{30,\tau_0,0,A}\rVert_{L_2[0,\infty)}
&\le\varkappa(T)\,\sqrt{\varphi(30,\tau_0,0)}=1.83\cdot10^{-7},\\
\lVert H_{A}-H_{30,\tau_0,0,A}\rVert_{L_2[0,\infty)}\
&\le\varkappa(T)\sqrt{M\,\psi(30,\tau_0,0)}=4.09\cdot10^{-6}.
\end{align*}

Third, we return to $N=10$, take as initial values the found $\tau_0=19.2$ and $\alpha=0$, and find the minimum of $\varphi(N,\tau,\alpha)$ over $\tau$ and $\alpha$.
We obtain the following results (right fig.~\ref{f:line20}). The optimal $\tau$ is $\tau_1=19.20$; the optimal $\alpha$ is $\alpha_1=0.0000239$; $\sqrt{\varphi(10,\tau_1,\alpha_1)}=0.001036$; $\varkappa(T)=28.358$; $\sqrt{M}=17.3$. According to Theorem~\ref{t:1} we have
\begin{align*}
\lVert H_{A}-H_{10,\tau_1,\alpha_1,A}\rVert_{L_2[0,\infty)}&\ge\sqrt{\psi(10,\tau_1,\alpha_1)}=0.000193,\\
\lVert H_{A}-H_{10,\tau_1,\alpha_1,A}\rVert_{L_2[0,\infty)}
&\le\varkappa(T)\,\sqrt{\varphi(10,\tau_1,\alpha_1)}=0.0294,\\
\lVert H_{A}-H_{10,\tau_1,\alpha_1,A}\rVert_{L_2[0,\infty)}\
&\le\varkappa(T)\sqrt{M\,\psi(10,\tau_1,\alpha_1)}=0.0947.
\end{align*}
\end{example}

\begin{figure}[htb]
\begin{center}
\includegraphics[width=.9\textwidth]{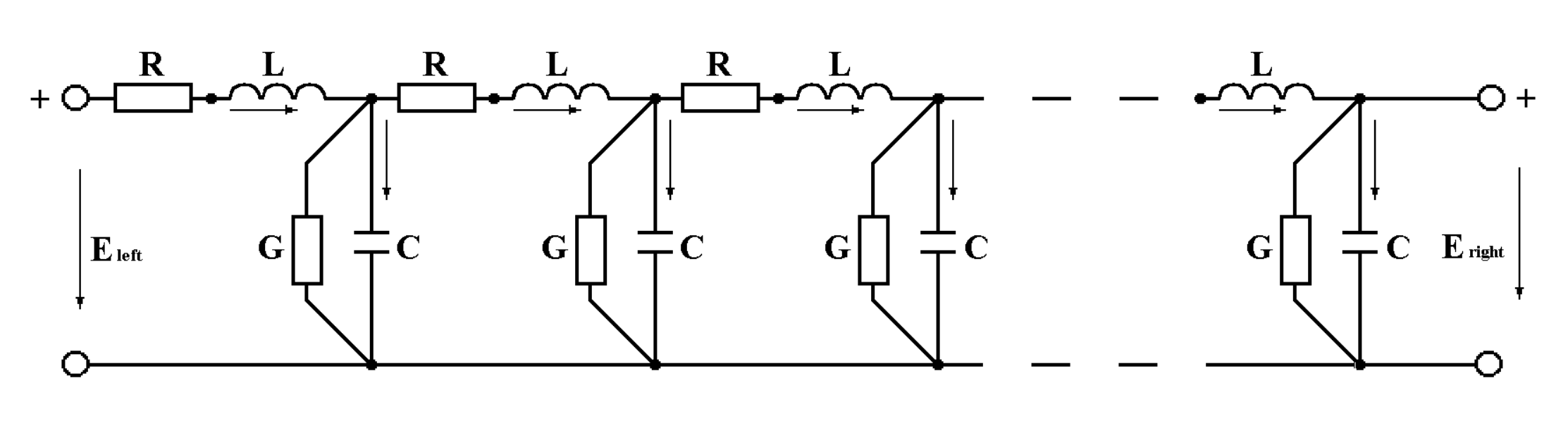}
\caption{A discrete model of a transmission line}\label{f:rcln}
%\end{center}
%\end{figure}
%
%\begin{figure}[htb]
%\begin{center}
\includegraphics[width=\textwidth]{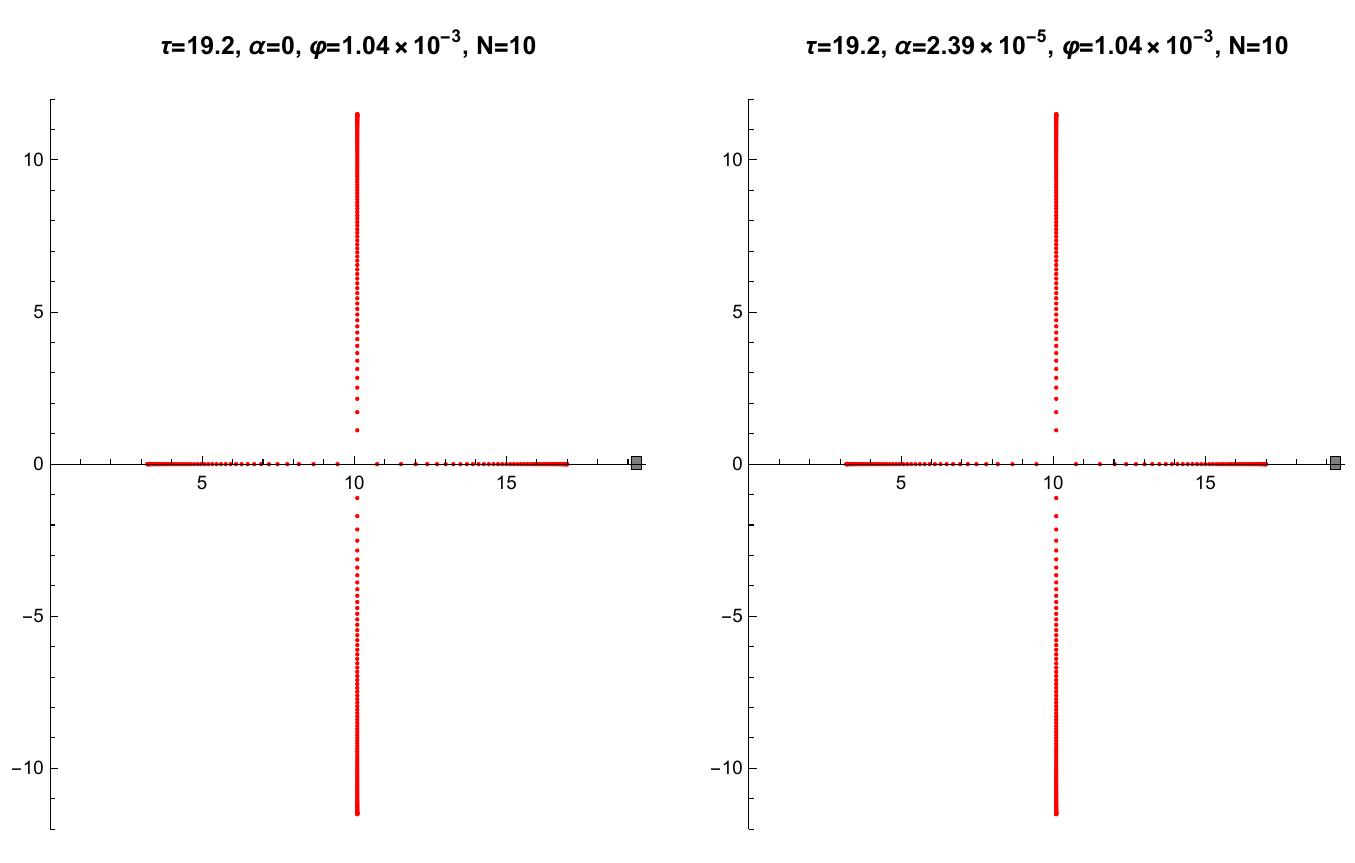}
\caption{The points show the spectrum of the matrix $-A$ from example~\ref{ex:1}, the small squares are the found $\tau$. In the right figure the minimum is taken over $\tau$ and $\alpha$; in the left figure the minimum is taken only over $\tau$ with $\alpha=0$}\label{f:line20}
\end{center}
\end{figure}

\begin{example}\label{ex:2}
In Example~\ref{ex:1} the minimum of $\varphi(N,\tau,\alpha)$ is attained almost at $\alpha=0$.
We present here another example of the same kind. Since our estimate depends only on the spectrum of $A$, we do not present a matrix $A$ itself and work only with its possible spectrum.

We take 200 random complex numbers; their real parts have the Maxwell distribution with $\sigma=4$ (the probability density for value $x$ in the Maxwell distribution is proportional to $x^2 e^{-x^2/(2\sigma^2)}$ for $x>0$, and is zero for $x<0$) and imaginary parts have the normal distribution with the mean value $\mu=0$ and the variance $\sigma^2=1$. We interpret these points as a possible spectrum of $-A$; we present them in fig.~\ref{f:prob20}.
The results of calculation are as follows.

First we take $N=10$ and $\alpha=0$. Starting from the initial point $\tau=1$ we find that the minimum of $\varphi(10,\tau,0)$ over $\tau$ is attained at $\tau_0=5.83$ and
\begin{align*}
\sqrt{\varphi(10,\tau_0,0)}&=0.00878,\\
\sqrt{\psi(10,\tau_0,0)}&=0.00742.
\end{align*}
The experiment with $N=30$ and $\alpha=0$ gives $\tau_0=5.67$ and
\begin{align*}
\sqrt{\varphi(30,\tau_0,0)}&=1.58\cdot10^{-6},\\
\sqrt{\psi(30,\tau_0,0)}&=1.32\cdot10^{-6}.
\end{align*}

Then we take $N=10$ and begin iterations from the found $\tau_0=5.83$ and $\alpha=0$. Now
the optimal $\tau$ is $\tau_1=5.82$ and the optimal $\alpha$ is $\alpha_1=-0.000915$. For the estimates from Theorem~\ref{t:1} we have
\begin{align*}
\sqrt{\varphi(10,\tau_1,\alpha_1)}&=0.00874,\\
\sqrt{\psi(10,\tau_1,\alpha_1)}&=0.00738.
\end{align*}

\begin{figure}[htb]
\begin{center}
\includegraphics[width=\textwidth]{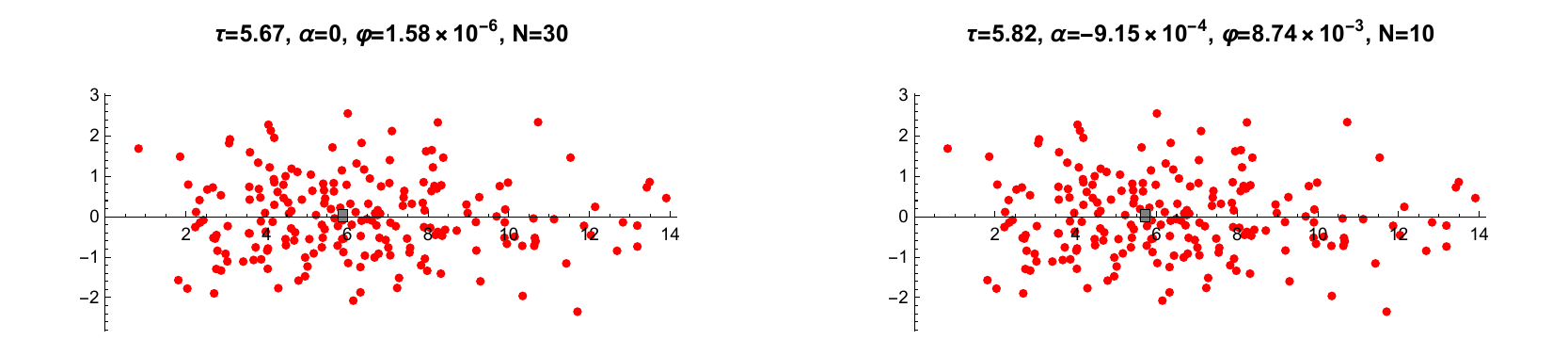}
\caption{The points from example~\ref{ex:2}, the small squares are the found $\tau$. In the right figure the minimum is taken over $\tau$ and $\alpha$; in the left figure the minimum is taken only over $\tau$ with $\alpha=0$. Note that in the right fig. $N=10$, but in the left fig. $N=30$}\label{f:prob20}.
%\end{center}
%\end{figure}
%\begin{figure}[htb]
%\begin{center}
\includegraphics[width=.45\textwidth]{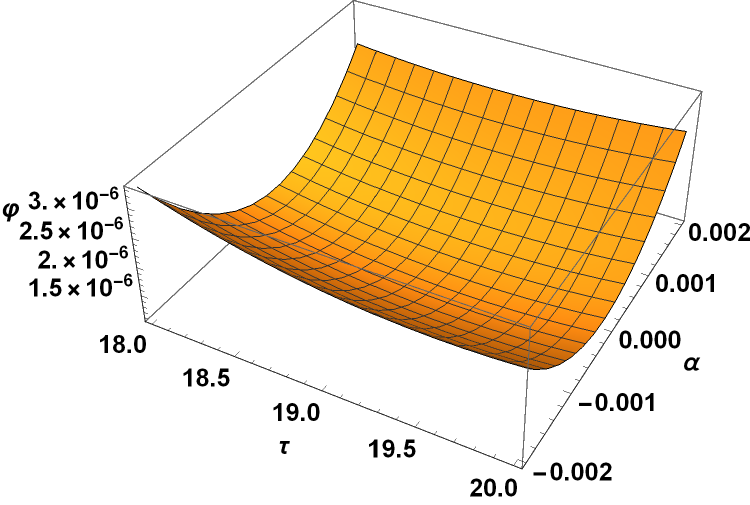}\hfill
\includegraphics[width=.45\textwidth]{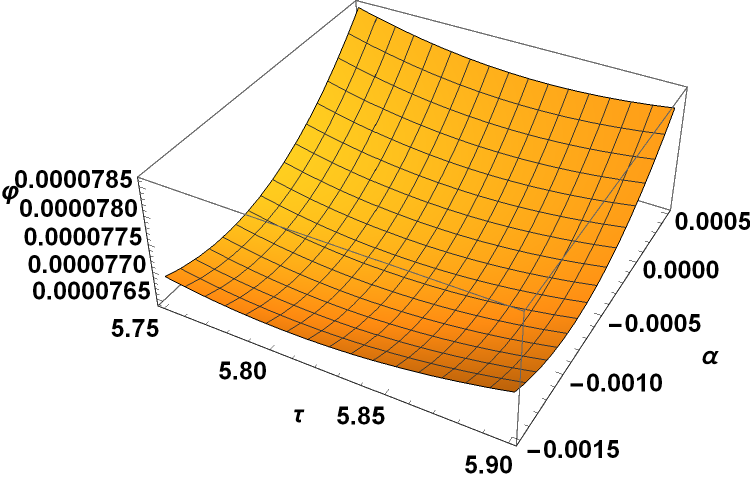}
\caption{The graphs of the function $\varphi$ from Examples~\ref{ex:1} (left) and~\ref{ex:2} (right) for $N=10$ in a neighbourhood of the minimum point}\label{f:plot_phi}
\end{center}
\end{figure}

Fig.~\ref{f:plot_phi} shows that the function $\varphi$ is rather smooth and convex. Thus, the problem of finding of its minimum can be solved by standard tools.
\end{example}

\end{document}